\newtheorem{theorem}{Theorem}[section]
\newtheorem{lemma}[theorem]{Lemma}
\newtheorem{proposition}[theorem]{Proposition}
\newtheorem{corollary}[theorem]{Corollary}
\newtheorem{conjecture}[theorem]{Conjecture}
\theoremstyle{definition}
\newtheorem{definition}[theorem]{Definition}
\newtheorem{example}[theorem]{Example}
\newcommand{\LeftEqNo}{\let\veqno\@@leqno}
\numberwithin{equation}{section}
\newcommand{\C}{\mathbb{C}}
\newcommand{\R}{\mathbb{R}}
\newcommand{\Z}{\mathbb{Z}}
\newcommand{\RP}{\mathbb{R}\mathrm{P}}
\newcommand{\id}{\mathop{\mathrm{id}}\nolimits}
\newcommand{\Int}{\mathrm{Int}\,}
\begin{document}
\title{Characterizations of knot groups and knot symmetric quandles of surface-links}
\author{Jumpei Yasuda}
\address{Department of Mathematics, Graduate School of Science, Osaka University,  Toyonaka, Osaka 560-0043, Japan}
\email{u444951d@ecs.osaka-u.ac.jp}
\keywords{Surface-link, Knot group, Symmetric quandle}
\subjclass[2020]{Primary 57K45, Secondary 57K10}

\begin{abstract} 
    The knot group is the fundamental group of a knot or link complement.
    A necessary and sufficient conditions for a group to be realized as the knot group of some link was provided.
    This result was shown using the closed braid method.
    González-Acuña and Kamada independently extended this characterization to the knot groups of orientable surface-links.
    Kamada applied the closed 2-dimensional braid method to show this result.

    In this paper, we generalize these results to characterize the knot groups of surface-links, including non-orientable ones.
    We use a plat presentation for surface-links to prove it.
    Furthermore, we show a similar characterization for the knot symmetric quandles of surface-links.
    As an application, we show that every dihedral quandle with an arbitrarily good involution can be realized as the knot symmetric quandle of a surface-link.
\end{abstract}

\maketitle

\section{Introduction}
The \textit{knot group} is the fundamental group of the complement of a knot or a link.
In knot theory, the knot group plays a crucial role in the classification and study of invariants for knots and links.
This naturally leads to the following question:
for a given group $G$, when is $G$ the knot group of some link?
The answer to this question is provided by the following theorem:

\begin{theorem}[\cite{Alexander1923,Artin1925}]
    A group $G$ is the knot group of a link if and only if there exists an $m$-braid $b$ for some $m \geq 1$ such that $G$ has a presentation
    \[
        \left\langle x_1, \ldots, x_{m}~
        \begin{array}{|c}
           x_i = b \cdot x_i \quad (i=1,\ldots,m)
        \end{array}
      \right\rangle.
    \]
\end{theorem}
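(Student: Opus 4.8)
The plan is to establish both implications at once, using Alexander's braiding theorem together with a van Kampen computation of the fundamental group of the complement of a closed braid. I fix the following notation: $F_m = \langle x_1, \ldots, x_m \rangle$ is the free group of rank $m$, $\beta \colon B_m \to \Aut(F_m)$ is the Artin representation of the $m$-strand braid group, and for $b \in B_m$ the symbol $\hat b \subset S^3$ denotes the associated closed braid; the expression $b \cdot x_i$ in the statement is read as $\beta(b)(x_i) \in F_m$. The geometric frame is the genus-one Heegaard decomposition $S^3 = V \cup_T V'$ into solid tori, arranged so that $V'$ is a tubular neighborhood of the braid axis and $\hat b$ lies in $\Int V = \Int(D^2 \times S^1)$, meeting each meridian disk $D^2 \times \{\theta\}$ transversally in $m$ points.

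\emph{From a link to the presentation.} Given a link $L$, I would first invoke Alexander's theorem to isotope $L$ to a closed braid $\hat b$ with $b \in B_m$ for some $m \geq 1$. Cutting $V$ open along a meridian disk and using isotopy extension exhibits $V \setminus \hat b$ as the mapping torus of the $m$-punctured disk $D^2 \setminus \{m \text{ points}\}$ with monodromy a homeomorphism representing $b$; since that homeomorphism acts on $\pi_1(D^2 \setminus \{m \text{ points}\}) \cong F_m$ by $\beta(b)$ (orientation conventions aside), this gives
\[
\pi_1(V \setminus \hat b) \cong \big\langle\, x_1, \ldots, x_m,\, t \;\big|\; t\,x_i\,t^{-1} = \beta(b)(x_i)\ (i = 1, \ldots, m)\,\big\rangle,
\]
where the $x_i$ are meridians of the $m$ strands read off on a fixed meridian disk and $t$ is the class of the core circle of $V$, i.e.\ a longitude of $V$. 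Next I would apply van Kampen to $S^3 \setminus \hat b = (V \setminus \hat b) \cup_T \Int V'$: since $\Int V'$ is an open solid torus disjoint from $\hat b$ with $\pi_1 \cong \Z$, gluing it back kills the meridian of $V'$, and in this Heegaard splitting the meridian of $V'$ is a longitude of $V$, namely the class $t$. Setting $t = 1$ and simplifying yields
\[
\pi_1(S^3 \setminus \hat b) \cong \big\langle\, x_1, \ldots, x_m \;\big|\; x_i = \beta(b)(x_i)\ (i = 1, \ldots, m)\,\big\rangle = \big\langle\, x_1, \ldots, x_m \;\big|\; x_i = b \cdot x_i \,\big\rangle,
\]
the desired presentation of the knot group of $L$.

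\emph{From the presentation to a link.} Conversely, if $G$ admits a presentation of the displayed form for some $b \in B_m$, I would set $L = \hat b$; running the computation above verbatim identifies $\pi_1(S^3 \setminus L)$ with the given presentation, so $G$ is the knot group of $L$.

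\emph{Main obstacle.} I would cite Alexander's theorem outright, so the real work is in the van Kampen step, and two points there need care. First, that the monodromy of the fibration $V \setminus \hat b \to S^1$ acts on the meridian basis $x_1, \ldots, x_m$ exactly by the Artin automorphism $\beta(b)$: I would verify this by reading a braid word $b = \sigma_{i_1}^{\pm 1} \cdots \sigma_{i_k}^{\pm 1}$ one letter at a time and matching the Wirtinger relations at the crossings against the standard formulas $\sigma_i \colon x_i \mapsto x_i x_{i+1} x_i^{-1},\ x_{i+1} \mapsto x_i,\ x_j \mapsto x_j\ (j \neq i, i+1)$ for the Artin generators. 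Second, the identification of the Dehn-filling slope with $t$: this I would read off the Heegaard picture, using that the meridian of a tubular neighborhood of an unknotted axis is a longitude of the complementary solid torus. Everything else — van Kampen, the semidirect-product presentation of a mapping-torus group, and the Tietze simplification after setting $t = 1$ — is routine.
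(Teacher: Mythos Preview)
Your proposal is correct and follows exactly the route the paper indicates: the paper does not give a detailed proof of this classical theorem but simply remarks that it ``follows from Alexander's theorem \cite{Alexander1923}, which states that every link is equivalent to the closure of some braid.'' Your van Kampen computation of $\pi_1(S^3\setminus\hat b)$ via the mapping-torus description of $V\setminus\hat b$ is the standard way to supply the missing group-presentation step, and it is carried out correctly.
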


Here, $b\cdot x$ denotes the action on the free group $F_m$ by the braid group $B_m$, arising naturally from the work of Artin \cite{Artin1925}.
We will define this action in Section~\ref{Subsection: Braid systems of braided surfaces}.
This theorem follows from Alexander's theorem \cite{Alexander1923}, which states that every link is equivalent to the closure of some braid.

A \textit{surface-knot} is a closed connected surface smoothly embedded in the 4-space, and a \textit{surface-link} is a disjoint union of surface-knots.
Gonz\'{a}lez-Acu\~{n}a \cite{Gonzalez-Acuna94} and Kamada \cite{Kamada1994-01} independently proved a characterization of the knot groups of orientable surface-links.
We recall the result due to Kamada as follows:
A group presentation is called an \textit{$(m,n)$-presentation} (\textit{associated with $m$-braids $b_1, \dots, b_n$}) if it is
\begin{align*}
    \left\langle x_1, \ldots, x_{m}~
    \begin{array}{|c}
        b_i \cdot x_{1} = b_i \cdot x_{2} \quad (i=1,\ldots,n)
    \end{array}
    \right\rangle.
\end{align*}
We say that an $(m,n)$-presentation satisfies the \textit{$\partial$-condition} if there exist $n$ signs $\varepsilon_1, \dots, \varepsilon_n \in \{\pm 1\}$ such that
    \[
        \prod_{i=1}^n b_i^{-1} \sigma_1^{\varepsilon_i} b_i = 1_{m},
    \]
where $\sigma_i$ is Artin's generator of $B_m$ and $1_{m}$ is the unit element of $B_m$.

\begin{theorem}[\cite{Kamada1994-01}]\label{Theorem: Kamadas characterization}
    A group $G$ is the knot group of a $c$-component oriententable surface-link with the Euler characteristic $\chi$ if and only if $G$ satisfies the following conditions for some $m, n \geq 0$:
    \begin{enumerate}
        \item $G$ has an $(m,n)$-presentation satisfying the $\partial$-condition.
        \item It holds that $\chi = 2m - n$.
        \item $G/[G,G]$ is isomorphic to $\Z^{c}$.
    \end{enumerate}
    In particular, a group $G$ is the knot group of an orientable surface-link if and only if $G$ satisfies (1) for some $m, n \geq 0$.
\end{theorem}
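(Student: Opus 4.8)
The plan is to prove the equivalence by passing through a normal form for orientable surface-links, namely the \emph{closure of a braided surface} in $D^2\times D^2$ (equivalently, a plat presentation of a surface-link, which is the vehicle the paper uses because it also handles the non-orientable case). Recall that a braided surface $S$ of degree $m$ is encoded up to equivalence by a \emph{braid system} $(a_1,\dots,a_n)$, a tuple of elements of $B_m$ each conjugate to $\sigma_1$ or $\sigma_1^{-1}$, recording the braid monodromy about the $n$ simple branch points of the projection $S\to D^2$ onto the second factor. Two structural facts drive everything. First, a braided surface, and hence its closure, is always an \emph{orientable} surface, since the branched covering $S\to D^2$ is locally modelled on $z\mapsto z^{2}$, so the orientation pulled back from the base off the branch locus extends across each branch point; this is exactly why braided-surface and closed-braid methods reach only orientable surface-links. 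Second, the boundary $\partial S\subset D^2\times S^1$ is the closure of the braid $a_1\cdots a_n$, so capping it off with $m$ trivial disks inside $S^4$ produces a genuine closed surface $\widehat S$ precisely when $a_1\cdots a_n=1_m$, that is, exactly under the $\partial$-condition; in that case Riemann--Hurwitz gives $\chi(\widehat S)=\chi(S)+m=(m-n)+m=2m-n$.

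For sufficiency, suppose $G$ has an $(m,n)$-presentation associated with $b_1,\dots,b_n$ satisfying the $\partial$-condition with signs $\varepsilon_1,\dots,\varepsilon_n$. I would take $S$ to be the braided surface with braid system $(b_1^{-1}\sigma_1^{\varepsilon_1}b_1,\dots,b_n^{-1}\sigma_1^{\varepsilon_n}b_n)$; the $\partial$-condition makes its closure $\widehat S\subset S^4$ a closed and automatically orientable surface. A van Kampen argument, splitting $S^4$ along the $3$-sphere separating $S$ from the $m$ capping disks, identifies $\pi_1(S^4\setminus\widehat S)$ with the free group on meridians $x_1,\dots,x_m$ modulo the relations $b_i\cdot x_1=b_i\cdot x_2$ contributed by the branch points, the capping side contributing no new generators or relations; hence $\pi_1(S^4\setminus\widehat S)\cong G$. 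Since $H_1$ of the complement of any surface-link is free abelian of rank equal to the number of components (Alexander duality), condition (3) forces $\widehat S$ to have $c$ components, and condition (2) matches $\chi(\widehat S)=2m-n$. This gives one direction of the main equivalence, and starting from (1) alone the same construction already produces some orientable surface-link with knot group $G$, which together with the necessity direction yields the ``in particular'' statement.

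For necessity, I would invoke the normal-form theorem that every orientable surface-link is equivalent to the closure of a braided surface — the $2$-dimensional analogue of Alexander's theorem, and in this paper's framework the statement that every surface-link admits a plat presentation. Given such a presentation of a $c$-component orientable surface-link $F$ with $\chi(F)=\chi$, one reads off its braid system, each element of the form $b_i^{-1}\sigma_1^{\varepsilon_i}b_i$, whose product is $1_m$ since $\partial S$ must be trivial for $\widehat S$ to be closed — this is the $\partial$-condition. The same van Kampen computation then presents the knot group of $F$ as the corresponding $(m,n)$-presentation, establishing (1), while (2) and (3) follow from $\chi(\widehat S)=2m-n$ and the computation of $H_1$ of the complement.

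I expect the main obstacle to be the normal-form (plat-presentation) theorem invoked for necessity: setting up a plat form for surface-links that exists for every surface-link, that encodes orientability through the shape of its braid system together with the $\partial$-condition, and whose complement group is computed by the meridian-and-branch-point recipe above. The remaining difficulties are more routine but still delicate: the van Kampen bookkeeping confirming that the $3$-sphere splitting piece and the trivial capping disks contribute nothing beyond the listed relations, and the careful tracking of the degree $m$ and the branch number $n$ so that $2m-n$ equals the prescribed $\chi$ and the component count equals $c$.
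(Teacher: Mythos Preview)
Your outline is essentially Kamada's original argument, and the paper does not reprove this statement: Theorem~\ref{Theorem: Kamadas characterization} is quoted from \cite{Kamada1994-01}, and the abstract explicitly records that Kamada proved it via the closed 2-dimensional braid method. So there is nothing in the paper to compare against beyond that attribution, and your sketch matches the cited proof.

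One point does deserve correction. You twice identify the \emph{closure} of a braided surface with the paper's \emph{plat closure}, calling the latter an equivalent vehicle. They are different constructions. The closure, which is what your argument actually uses, caps off a 2-dimensional braid of degree $m$ (boundary braid equal to $1_m$, whence the $\partial$-condition) with $m$ trivial disks and yields the $(m,n)$-presentation of Theorem~\ref{Theorem: Kamadas characterization}. The plat closure instead attaches a wicket surface $A_S$ to an \emph{adequate} braided surface of even degree $2m$ (boundary braid merely in $K_{2m}$, whence the \emph{weak} $\partial$-condition) and produces the $(2m,n)$-presentation \emph{with inverses} appearing in Theorems~\ref{Main theorem} and~\ref{Theorem: Characterization of knot grp in genuine plat form}. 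The paper passes to plat closures precisely because closures of 2-dimensional braids are always orientable, as you yourself note, while plat closures also realize non-orientable surface-links. Your argument is correct for Theorem~\ref{Theorem: Kamadas characterization}, but the parenthetical equating it with the plat framework should be dropped; running the plat route in the orientable case gives Theorem~\ref{Theorem: Characterization of knot grp in genuine plat form}, not Theorem~\ref{Theorem: Kamadas characterization} on the nose.
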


In this paper, we give an analogue result for arbitrary surface-links including non-orientable ones.
A group presentation is called a $(2m,n)$-presentation \textit{with inverses} (associated with $b_1, \dots, b_n$) if it is
\[
    \left\langle x_1, \ldots, x_{2m}~
    \begin{array}{|cc}
        b_i \cdot x_{1} = b_i \cdot x_{2}, ~ x_{2j-1}=x_{2j}^{-1} \quad (i = 1,\ldots,n, ~ j = 1,\ldots,m)
    \end{array}
    \right\rangle,
\]
which is obtained from a $(2m, n)$-presentation by adding new $m$ relations $x_{2j-1} = x_{2j}^{-1}$ ($j = 1, \dots, m$).
We say that an $(2m,n)$-presentation satisfies the \textit{weak $\partial$-condition} if there exist $n$ signs $\varepsilon_1, \dots, \varepsilon_n \in \{1, -1\}$ such that
\[
    \prod_{i=1}^n b_i^{-1} \sigma_1^{\varepsilon_i} b_i \in K_{2m},
\]
where $K_{2m}$ is the Hilden subgroup of $B_{2m}$ defined in Section~\ref{Subsection: The plat closure of surface-links}.
A surface-link $F$ is called \textit{$(c, d)$-component} if $F$ consists of $c$ orientable surface-knots and $d$ non-orientable ones.

\begin{theorem}\label{Main theorem}
    A group $G$ is the knot group of a $(c, d)$-component surface-link with the Euler characteristic $\chi$ if and only if $G$ satisfies the following conditions for some $m, n \geq 0$:
    \begin{enumerate}
        \item $G$ has an $(2m,n)$-presentation with inverses satisfying the weak $\partial$-condition.
        \item It holds that $\chi = 2m - n$.
        \item $G/[G,G]$ is isomorphic to $\Z^{c} \oplus (\Z/2)^{d}$.
    \end{enumerate}
    In particular, a group $G$ is the knot group of a surface-link if and only if $G$ satisfies (1) for some $m, n \geq 0$.
\end{theorem}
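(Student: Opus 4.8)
We prove the equivalence by reducing both implications to a single model --- the \emph{plat closure} of a braided surface developed in Section~\ref{Subsection: The plat closure of surface-links} --- together with a van Kampen computation of the fundamental group of the complement, mirroring Artin's computation for closed braids and Birman's for classical plats. Recall from that section that a braided surface of degree $2m$ is recorded by a braid system $(b_1,\dots,b_n)$ of $2m$-braids; that such a braided surface admits a plat closure $\widehat{S}$, which is a surface-link, precisely when $\prod_{i=1}^{n} b_i^{-1}\sigma_1^{\varepsilon_i} b_i$ lies in the Hilden subgroup $K_{2m}$ for some choice of signs $\varepsilon_i\in\{\pm 1\}$ --- that is, when the weak $\partial$-condition holds; and that, conversely, every surface-link is equivalent to $\widehat{S}$ for some such braided surface. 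We shall also use that, by Alexander duality, $H_1(S^4\setminus F)\cong H^2(F)\cong\Z^{c}\oplus(\Z/2)^{d}$ for any $(c,d)$-component surface-link $F$, so that condition (3) is equivalent to $F$ being $(c,d)$-component.

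For the ``only if'' direction, let $F$ be a $(c,d)$-component surface-link with $\chi(F)=\chi$. Put $F$ into plat form, $F\simeq\widehat{S}$, for a braided surface $S$ of degree $2m$ with braid system $(b_1,\dots,b_n)$. A direct Euler-characteristic count over the branched-covering and capping data gives $\chi=2m-n$, which is (2), while the existence of the plat closure yields the weak $\partial$-condition. To compute the knot group we apply van Kampen's theorem along the plat decomposition of $S^4\setminus F$: the $2m$ sheets contribute meridian generators $x_1,\dots,x_{2m}$; the braiding acts on these via the $B_{2m}$-action on the free group recalled in Section~\ref{Subsection: Braid systems of braided surfaces}; each simple branch point identifies the two sheets interchanged by a conjugate of $\sigma_1$, contributing the relation $b_i\cdot x_1=b_i\cdot x_2$; and the standard trivial disc system used to cap off contributes the relations $x_{2j-1}=x_{2j}^{-1}$. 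Using that the capping braid lies in $K_{2m}$, one checks that every remaining relation is a consequence of these, so that $\pi_1(S^4\setminus F)$ has exactly the $(2m,n)$-presentation with inverses associated with $(b_1,\dots,b_n)$; this is (1). Finally, (3) is the Alexander-duality statement recalled above.

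For the ``if'' direction, assume $G$ satisfies (1)--(3) for some $m,n\ge 0$. By the weak $\partial$-condition the braid system $(b_1,\dots,b_n)$ witnessing (1) is plat-closable; set $F:=\widehat{S}$, a surface-link. The van Kampen computation of the previous paragraph shows $\pi_1(S^4\setminus F)\cong G$, and $\chi(F)=2m-n=\chi$ by (2). If $F$ has component type $(c',d')$, then $\Z^{c'}\oplus(\Z/2)^{d'}\cong H_1(S^4\setminus F)\cong G/[G,G]\cong\Z^{c}\oplus(\Z/2)^{d}$ by (3), so $(c',d')=(c,d)$ by the structure theorem for finitely generated abelian groups; hence $G$ is the knot group of a $(c,d)$-component surface-link of Euler characteristic $\chi$. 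The last assertion of the theorem is the special case in which clauses (2) and (3) are dropped: one implication is immediate from the equivalence just established, and for the other the construction $F=\widehat{S}$ produces a surface-link with knot group $G$ directly from any braid system satisfying (1).

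The crux is the van Kampen computation of $\pi_1(S^4\setminus\widehat{S})$: one must track how the branch points, the braiding, and the capping disc systems each contribute to a Wirtinger-type presentation, and verify that the Hilden condition $\prod_{i} b_i^{-1}\sigma_1^{\varepsilon_i}b_i\in K_{2m}$ is precisely what makes that presentation collapse to the $(2m,n)$-presentation with inverses, with no surviving relations --- the four-dimensional counterpart of the bookkeeping behind Theorem~\ref{Theorem: Kamadas characterization}. Granting the plat presentation of surface-links, which is the substance of Section~\ref{Subsection: The plat closure of surface-links}, the Euler-characteristic and abelianization clauses are then routine.
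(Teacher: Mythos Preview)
Your proposal is correct and follows essentially the same approach as the paper: both directions are reduced to the plat closure of an adequate braided surface (Proposition~\ref{theorem: plat form presentation}), the knot group is computed by the van Kampen argument you sketch (which the paper packages as Proposition~\ref{Proposition: knot group of the plat form}), and conditions (2) and (3) are handled via the Euler characteristic count $\chi(\widetilde{S})=2m-n$ and Alexander duality. The only cosmetic discrepancy is notational---in the paper the braid system is $(\beta_1,\dots,\beta_n)$ with $\beta_i=b_i^{-1}\sigma_1^{\varepsilon_i}b_i$, and the presentation is ``associated with $b_1,\dots,b_n$''---but your argument is the paper's argument.
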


We will show this theorem using the fact that every surface-link is equivalent to the plat closure of some braided surface, which is discussed in Section~\ref{Subsection: The plat closure of surface-links}.

In Section~\ref{Section: Quandles and symmetric quandles}, we extend Theorem~\ref{Main theorem} to the knot symmetric quandles of surface-links.
A \textit{quandle} \cite{Joyce82,Mateev84} is a set $Q$ with a binary operation $*: Q \times Q \to Q$ satisfying the following conditions:
\begin{enumerate}
    \item For any $a \in Q$, we have $a*a = a$.
    \item For any $a, b \in Q$, there exists a unique element $c \in Q$ such that $c*b = a$.
    \item For any $a, b, c \in Q$, we have $(a*b)*c = (a*c) * (b*c)$.
\end{enumerate}

Quandles are usefull tools in the study of oriented links and oriented surface-links.
Specifically, the knot quandle is defined for oriented links and oriented surface-links (Section~\ref{Subsection: Quandles}).
It has been shown that the knot quandle is a stronger invariant than the knot group for distinguishing surface-links (\cite{Tanaka-Taniguchi2023-arXiv}).

To apply quandle theory to surface-links including non-orientable ones, Kamada \cite{Kamada2006} introduced symmetric quandles, which is a pair of a quandle $Q$ and a good involution $\rho$ of $Q$.
The knot symmetric quandle is defined for any surface-link, with details provided in Section~\ref{Subsection: Symmetric quandles}.
We present an algebraic characterization of the knot symmetric quandles of surface-links:

\begin{theorem}\label{Main Theorem2}
    A symmetric quandle $(Q, \rho)$ is the knot symmetric quandle of a $(c,d)$-component surface-link with the Euler characteristic $\chi$ if and only if $(Q, \rho)$ satisfies the following conditions for some $m,n \geq 0$:
    \begin{enumerate}
        \item $(Q, \rho)$ has an $(2m,n)$-presentation with inverses satisfying the weak $\partial$-condition.
        \item It holds that $\chi = 2m - n$.
        \item $Q$ consists of $2c + d$ connected components $X_1, \dots, X_c$, $Y_1,  \dots, Y_c$, and $Z_1, \dots, Z_d$ such that $\rho(X_i) = Y_i$ and $\rho(Z_j) = Z_j$ for each $i \in \{1, \dots, c\}$ and $j \in \{1, \dots, d\}$.
    \end{enumerate}
    In particular, a symmetric quandle $(Q, \rho)$ is the knot symmetric quandle of a surface-link if and only if $(Q, \rho)$ satisfies (1) for some $m, n \geq 0$.
\end{theorem}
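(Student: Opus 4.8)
The plan is to mirror the proof of Theorem~\ref{Main theorem}, working with knot symmetric quandles in place of knot groups throughout. The key geometric input, established in Section~\ref{Subsection: The plat closure of surface-links} and already used for Theorem~\ref{Main theorem}, is that every surface-link is the plat closure of a braided surface; the key algebraic input, to be set up in Section~\ref{Section: Quandles and symmetric quandles}, is a presentation of the knot symmetric quandle of such a plat closure directly from the braid system. Concretely: if $F$ is the plat closure of a braided surface of degree $2m$ with $n$ branch points and braid system $b_1,\dots,b_n \in B_{2m}$, then the knot symmetric quandle of $F$ has the $(2m,n)$-presentation with inverses associated with $b_1,\dots,b_n$ — generators $x_1,\dots,x_{2m}$ being meridians of the $2m$ sheets through a reference fiber, branch-point relations $b_i\cdot x_1 = b_i\cdot x_2$, and cap relations $x_{2j-1}=\rho(x_{2j})$ produced by the $m$ capping disks — and the existence of such a presentation requires precisely the weak $\partial$-condition on $b_1,\dots,b_n$. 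Granting this, the theorem splits into a computation and a construction.

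For the ``only if'' direction, let $F$ be a $(c,d)$-component surface-link with $\chi(F)=\chi$. Take a plat presentation of $F$ as the plat closure of a braided surface $S$ of degree $2m$ with $n$ branch points, so that its braid system satisfies the weak $\partial$-condition; the presentation above yields condition~(1). The Euler characteristic count that proves condition~(2) of Theorem~\ref{Main theorem} applies verbatim, giving $\chi = 2m-n$. For condition~(3), we analyze the connected components of the knot symmetric quandle $(Q,\rho)$ of $F$: a component of $F$ that is orientable yields two connected components of $Q$ interchanged by $\rho$ (one for each co-orientation), whereas a non-orientable component of $F$ yields a single $\rho$-invariant component of $Q$; since $F$ has $c$ orientable and $d$ non-orientable components, $Q$ decomposes as $X_1,\dots,X_c,Y_1,\dots,Y_c,Z_1,\dots,Z_d$ with $\rho(X_i)=Y_i$ and $\rho(Z_j)=Z_j$, which is condition~(3).

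For the ``if'' direction, suppose $(Q,\rho)$ satisfies (1)--(3) for some $m,n\ge 0$, witnessed by braids $b_1,\dots,b_n\in B_{2m}$. Using the chart/braid-system description of braided surfaces together with the weak $\partial$-condition, build a braided surface $S$ of degree $2m$ with $n$ branch points and braid system $b_1,\dots,b_n$ whose plat closure $F$ is a closed surface-link; by the presentation formula, the knot symmetric quandle of $F$ is isomorphic to $(Q,\rho)$. Then $\chi(F) = 2m-n = \chi$ by (2), and reading off the $\rho$-orbits of the components of $Q$ as in the previous paragraph shows that $F$ is $(c,d)$-component in agreement with (3). The final ``in particular'' assertion follows because condition~(1) alone already produces, via this construction, a surface-link realizing $(Q,\rho)$ as its knot symmetric quandle; one then simply takes $c$, $d$, $\chi$ to be the data of that surface-link, which automatically fulfils (2) and (3).

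I expect the main obstacle to be the presentation formula for the knot symmetric quandle of a plat closure. This requires (i) a Wirtinger-type computation for braided surfaces tailored to symmetric quandles, tracking carefully the effect of crossing the branch locus and, crucially, the effect of the capping disks — this is where the good involution $\rho$ enters and produces the twisted inverse relations $x_{2j-1}=\rho(x_{2j})$ — and (ii) showing that the weak $\partial$-condition is exactly the algebraic incarnation of ``the braided surface closes up, in the plat sense, to a closed surface'', i.e.\ that the plat closure has empty boundary if and only if $\prod_{i=1}^n b_i^{-1}\sigma_1^{\varepsilon_i}b_i \in K_{2m}$ for some signs $\varepsilon_i$. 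A further delicate point is the component bookkeeping in condition~(3): establishing in both directions the correspondence between orientability of a component of $F$ and the $\rho$-orbit structure of the associated components of $Q$, and checking its compatibility with the Euler characteristic count so that $c$, $d$ and $\chi$ may be prescribed. Much of this parallels, and can borrow from, the proof of Theorem~\ref{Main theorem}, the genuinely new content being the role of $\rho$ in the capping relations and in the component analysis.
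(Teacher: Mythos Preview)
Your proposal is correct and follows essentially the same approach as the paper: both reduce to the plat-closure presentation formula for the knot symmetric quandle (Proposition~\ref{Proposition: knot symmetric quandle of plat form}, imported from \cite{Yasuda24}) and then mirror the proof of Theorem~\ref{Main theorem}, with the component analysis for condition~(3) carried out exactly as you describe. The only substantive ``obstacle'' you flag, the presentation formula itself, is taken as an input from \cite{Yasuda24} rather than reproved here, so your outline in fact matches the paper's level of detail.
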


The dihedral quandle $R_n$ is one of the most basic examples of quandles.
In Section~\ref{Section: knot symmetric quandles and dihedral quandles}, we provide presentations of $R_n$ as symmetric quandles.
Applying Theorem~\ref{Main Theorem2}, we show the following result:

\begin{theorem}\label{Theorem: dihedral quandle and knot symmetric quandle}
    For any $n \geq 1$ and any good involution $\rho$ of $R_n$, there exists a surface-link whose knot symmetric quandle is isomorphic to $(R_n, \rho)$.
\end{theorem}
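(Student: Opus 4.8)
The plan is to reduce to the ``in particular'' clause of Theorem~\ref{Main Theorem2}: it suffices to show that, for every $n\ge 1$ and every good involution $\rho$ of $R_n$, the symmetric quandle $(R_n,\rho)$ admits, for some $m,\ell\ge 0$, a $(2m,\ell)$-presentation with inverses satisfying the weak $\partial$-condition. I would first recall the good involutions of $R_n$ (classified earlier in Section~\ref{Section: knot symmetric quandles and dihedral quandles}): since the operation of $R_n$ is involutory, the good involution axiom forces $2\rho(b)=2b$ in $\Z/n$, so $\rho=\mathrm{id}$ when $n$ is odd, while for $n$ even one has $\rho(b)\in\{b,\,b+n/2\}$ with the choice depending only on the parity of $b$; in particular $\rho$ either preserves or swaps the two connected components of $R_n$. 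The argument then splits into these cases, but I would try to keep the construction as uniform as possible, the only structural difference being whether $\rho$ swaps or preserves the two components.

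For the construction I would start from the presentations of $(R_n,\rho)$ as a symmetric quandle obtained in Section~\ref{Section: knot symmetric quandles and dihedral quandles}. Recall that $R_n$ is generated as a quandle by two elements and is defined by an ``alternating word'' relation of length $n$, together with the relations recording $\rho$ on the generators. The plan is to (i) list the generators as a paired sequence $x_1,x_2,\dots,x_{2m}$ with $\rho(x_{2j-1})=x_{2j}$, using just enough pairs that their images exhaust $R_n$ and realise $\rho$ correctly --- when $\rho$ swaps the two components each pair covers a $\rho$-orbit $\{a,\,a+n/2\}$, while when $\rho$ preserves the components one pairs an element with itself and records this by the extra relation $x_{2j-1}=x_{2j}$, which has the admissible form $1_{2m}\cdot x_1 = 1_{2m}\cdot x_2$; and (ii) rewrite each defining relation in the form $b_i\cdot x_1=b_i\cdot x_2$, where $b_i\in B_{2m}$ is built from powers of $\sigma_1$ --- which turn $x_1$ into an alternating word of the required length --- conjugated by ``transport'' braids $\sigma_2\sigma_3\cdots$ that first carry the relevant pair of generators onto the first two strands. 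Once the quandle presentation is fixed this is routine bookkeeping.

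The crux is the weak $\partial$-condition: producing signs $\varepsilon_1,\dots,\varepsilon_\ell\in\{\pm1\}$ with $\prod_{i=1}^{\ell}b_i^{-1}\sigma_1^{\varepsilon_i}b_i\in K_{2m}$. Since each $b_i$ is a product of transport braids and powers of $\sigma_1$, each factor $b_i^{-1}\sigma_1^{\varepsilon_i}b_i$ is a half-twist along an explicit band, and I would choose the signs so that these half-twists cancel or telescope into a product visibly lying in the Hilden subgroup $K_{2m}$, checking this against the explicit generators of $K_{2m}$ recalled in Section~\ref{Subsection: The plat closure of surface-links}; equivalently, one can arrange the underlying braided surface to close up under the plat identification, which is exactly the weak $\partial$-condition. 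With this in hand, Theorem~\ref{Main Theorem2} yields a surface-link whose knot symmetric quandle is $(R_n,\rho)$, completing the proof. I expect the genuine difficulty to be precisely this verification --- it needs a concrete handle on $K_{2m}$ and careful sign bookkeeping --- with a secondary difficulty in arranging, uniformly over all good involutions, that the pairing relations $\rho(x_{2j-1})=x_{2j}$ are consistent with the defining relations of $R_n$.
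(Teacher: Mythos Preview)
Your plan is viable and would work, but it takes a different and more laborious route than the paper. You propose to verify the ``in particular'' clause of Theorem~\ref{Main Theorem2} directly in every case by writing down explicit braids $b_1,\dots,b_\ell$ and checking $\prod b_i^{-1}\sigma_1^{\varepsilon_i}b_i\in K_{2m}$. The paper does exactly this in one case only (the antipodal involution on $R_{4k}$, where it exhibits $b_1=(\sigma_2^{-1})^k$ and $b_2=(\sigma_2\sigma_1\sigma_3\sigma_2)^{-1}(\sigma_2^{-1})^k$ and checks the product lies in $K_4$), and then avoids the remaining computations by two geometric shortcuts: for $n$ odd it invokes Inoue's identification of the knot quandle of a $2$-twist spun $2$-bridge knot with $R_n$, so that the symmetric double realises $(R_{2n},\rho_A)$; and for the identity and half-antipodal involutions it takes connected sums with standard $P^2$-knots, which at the level of presentations simply adjoins relations $x=\overline{x}$ and collapses the antipodal presentation to the desired one. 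So the paper trades your uniform braid bookkeeping for off-the-shelf surface-link constructions.

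What your approach buys is self-containment: you never leave the framework of Theorem~\ref{Main Theorem2} and never cite external results on twist spins. What the paper's approach buys is that the weak $\partial$-condition---which you correctly flag as the genuine difficulty---has to be checked only once, and the connected-sum trick handles the remaining good involutions essentially for free. One small caution about your outline: the dichotomy ``$\rho$ preserves or swaps the two components'' is too coarse when $n\equiv 0\pmod 4$, since $\rho_A$, $\rho_{HA}$ and $\mathrm{id}$ all preserve the components setwise but require different relations; your presentation step will need to distinguish these, as Propositions~\ref{Proposition: Presentations of dihedral quandles with anti}--\ref{Proposition: Presentations of dihedral quandles with half anti} do.
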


In Section~\ref{Section: Final remarks}, we discuss the relationship between symmetric quandles and $P^2$-irreducibility of surface-links.
We construct an infinite family of 2-component $P^2$-irreducible $P^2$-links (Theorem~\ref{Theorem: P2-irreducible P2-links}).

\section{Braids and Braided surfaces}\label{Section: Braided surfaces and plat closure}
\subsection{Braids and Braided surfaces}\label{Subsection: Braids and braided surfaces}
Let $m \geq 1$ be an integer, $D^2$ a 2-disk, $I = [0,1]$, and $p_2: D^2\times I \to I$ the projection of $D^2\times I$ onto the second factor.
We fix an $m$-point subset $X_m = \{x_1, \dots, x_m\}$ of $\mathrm{Int}(D^2)$ such that $x_1, \dots, x_m$ lie on a line of $D^2$ in this order.
An \textit{$m$-braid} is a union $\beta$ of $m$ curves in $D^2\times I$ such that
\begin{enumerate}
    \item the restriction map $p_2|_\beta: \beta \to I$ is a covering map of degree $m$ and
    \item $\partial \beta = X_m \times \{0,1\}$.
\end{enumerate}
Two $m$-braids are said to be \textit{equivalent} if they are isotopic in $D^2\times I$ rel $\partial$.
The \textit{braid group} $B_m$ of degree $m$ is the fundamental group of the configuration space $\mathcal{C}_m$ of $m$-point subsets of $\mathrm{Int}(D^2)$ with based point $X_m$, which has the following presentation:
\begin{align*}
    \left\langle \sigma_1, \ldots, \sigma_{m-1}~
    \begin{array}{|cc}
        \sigma_i \sigma_j \sigma_i = \sigma_j \sigma_i \sigma_j~ (|i-j|=1), \quad \sigma_i \sigma_k = \sigma_k \sigma_i ~ (|i-k| > 1)
    \end{array}
    \right\rangle,
\end{align*}
where $\sigma_i$ ($i = 1, \dots, m-1$) is the generator of $B_m$ due to Artin \cite{Artin1925}.
Each elements of $B_m$ are naturally identified with equivalence classes of $m$-braids.

Let $B^2$ be a 2-disk with a based point $y_0 \in \partial B^2$.
Let $\mathrm{pr}_1: D^2\times B^2 \to D^2$ and $\mathrm{pr}_2: D^2\times B^2 \to B^2$ denote projections of $D^2\times B^2$.
A \textit{braided surface} \cite{Rudolph1983} of degree $m$ is a compact surface $S$ embedded in $D^2\times B^2$ such that
\begin{enumerate}
    \item the restriction map $\pi_S := \mathrm{pr}_2|_S: S \to B^2$ is a branched covering map of degree $m$ and
    \item $S \cap D^2 \times \{y_0\} = X_m \times \{y_0\}$.
\end{enumerate}
A \textit{2-dimensional braid} \cite{Viro90} of degree $m$ is a braided surface $S$ of degree $m$ with $\partial S = X_m \times \partial B^2$.
In this paper, every braided surface is assumed to be \textit{simple}, meaning that for every $y \in B^2$, the preimage $\pi_S^{-1}(y)$ consists of $m$ or $m-1$ points.
Two braided surfaces $S$ and $S'$ are said to be \textit{equivalent} if there exists
\begin{itemize}
    \item an isotopy $\{\Phi_t\}_{t \in [0,1]}$ of $D^2\times B^2$ carrying $S$ to $S'$, and
    \item an isotopy $\{h_t\}_{t \in [0,1]}$ of $B^2$
\end{itemize}
such that the following conditions hold for $t \in [0,1]$:
\begin{enumerate}
    \item $\mathrm{pr}_2 \circ \Phi_t = h_t \circ \mathrm{pr}_2$, and
    \item $\Phi_t|_{D^2 \times \{y_0\}} = \id$.
\end{enumerate}
Kamada \cite{Kamada1996} showed that $S$ and $S'$ are equivalent if and only if there exists an isotopy $\{\Phi_t\}_{t \in [0,1]}$ of $D^2\times B^2$ carrying $S$ to $S'$ such that $\Phi_t(S)$ is a simple braided surface for $t \in [0,1]$.

\subsection{Braid systems of braided surfaces}\label{Subsection: Braid systems of braided surfaces} 
Let $S$ be a braided surface of degree $m$.
The branch locus of $\pi_S: S \to B^2$ is denoted by $\Sigma(S) = \{x_1, \dots, x_n\}~(\subset B^2)$, where $n \geq 0$.

The \textit{braid monodromy} of $S$ is the homomorphism $\rho_S: \pi_1(B^2\setminus \Sigma(S), y_0) \to \pi_1(\mathcal{C}_m, X_m)$ defined as follws:
For each $y \in B^2\setminus \Sigma(S)$, the image $\mathrm{pr}_1(\pi_S^{-1}(y))$ is a point of $\mathcal{C}_m$.
For a loop $\gamma: (I, \partial I) \to (B^2\setminus \Sigma(S), y_0)$, the loop $\rho_S(\gamma): (I, \partial I) \to (\mathcal{C}_m, X_m)$ is defined by $\rho_S(\gamma)(t) := \mathrm{pr}_1(\pi_S^{-1}(\gamma(t)))$.
Then, the braid monodromy is defined as the homomorphism $\rho_S: \pi_1(B^2\setminus \Sigma(S), y_0) \to B_m = \pi_1(\mathcal{C}_m, X_m)$ sending $[\gamma]$ to $[\rho_S(\gamma)]$.

It is known that $\pi_1(B^2\setminus \Sigma(S), y_0)$ is isomorphic to the free group on $n$ letters.
To specify a generating system of $\pi_1(B^2\setminus \Sigma(S), y_0)$, we use a \textit{Hurwitz arc system} in $B^2$ (with the base point $y_0$) which is an $n$-tuple $\mathcal{A} = (\alpha_1, \cdots, \alpha_r)$ of oriented simple arcs in $B^2$ such that
\begin{enumerate}
   \item for each $i =1, \ldots, r$, $\alpha_i \cap \partial D_2 = \partial \alpha_i \cap \partial D_2 = \{y_0\}$ which is the terminal point of $\alpha_i$,
   \item $\alpha_i \cap \alpha_j = \{y_0\}$ ($i\neq j$), and
   \item $\alpha_1, \ldots,  \alpha_r$ appear in this order around the base point $y_0$.
\end{enumerate}
The \textit{starting point set} of $\mathcal{A}$ is the set of initial points of $\alpha_1, \ldots, \alpha_r$.

Let $\mathcal{A} = (\alpha_1, \cdots, \alpha_n)$ be a Hurwitz arc system in $B^2$ with the starting point set $\Sigma(S)$.
Let $N_i$ be a (small) regular neighborhood of the starting point of $\alpha_i$ and $\overline{\alpha_i}$ an oriented arc obtained from $\alpha_i$ by restricting to $D_2\setminus \Int N_i$.
For each $\alpha_i$, let $\gamma_i$ be a loop $\overline{\alpha_i}^{-1} \cdot \partial N_i \cdot \overline{\alpha_i}$ in $D_2\setminus \Sigma(S)$ with base point $y_0$, where $\partial N_i$ is oriented counter-clockwise.
The $n$-tuple $(\gamma_1, \dots, \gamma_n)$ is called a \textit{Hurwitz loop system} in $B^2$ with the base point $y_0$, and $\pi_1(B^2\setminus \Sigma(S), y_0)$ is the free group generated by $[\gamma_1], [\gamma_2], \ldots, [\gamma_n]$.
\begin{definition}
    Let $\gamma_i$ as above.
    A \textit{braid system} of $S$ is an $n$-tuple $(\beta_1, \dots, \beta_n)$ with $\beta_i = \rho_S(\gamma_i)$.
\end{definition}
The choice of a braid system depends on the Hurwitz arc system. Rudolph \cite{Rudolph1983} showed that an $n$-tuple $(\beta_1, \dots, \beta_n)$ of $m$-braids is  a braid system of some $S$ if and only if $\beta_i$ is a conjugate of $\sigma_1$ or $\sigma_1^{-1}$.
We notice that for $i \in \{ 1, \dots, n-1 \}$ and $\varepsilon \in \{ \pm 1\}$, a conjugate of $\sigma_i^{\varepsilon}$ is a conjugate of $\sigma_1^{\varepsilon}$.
Such an $n$-tuple is simply called a braid system.
For a braid system $b = (\beta_1, \dots, \beta_n)$ and $i \in \{1,\dots, n-1\}$, we define new braid systems
\[
    b_1 = (\beta_1, \dots, \beta_{i-1},~
    \beta_i \, \beta_{i+1} \,\beta_i^{-1},~
    \beta_i,~
    \beta_{i+2}, \dots, \beta_n),
    \quad
    b_2 = (\beta_1, \dots, \beta_{i-1},~
    \beta_{i+1},~
    \beta_{i+1}^{-1} \, \beta_{i} \,\beta_{i+1},~
    \beta_{i+2}, \dots, \beta_n).
\]
Then, we say that $b_1$ and $b_2$ are obtained from $b$ by a \textit{slide action}.
Two braid systems are said to be \textit{slide equivalene} (or \textit{Hurwitz equivalene}) if one can be obtained from the other by a finite sequance of slide actions.

\begin{proposition}[\cite{Rudolph1983}]
    There exists an one to one natural correspondance between the set of equivalene classes of (simple) braided surfaces degree $m$ and the set of slide equivalene classes of braid systems of degree $m$:
    \[
        \left\{\mbox{Braided surfaces}\right\}/ \mbox{equivalene} \quad \stackrel{1:1}{\longleftrightarrow} \quad \left\{ \mbox{braid systems} \right\}/\mbox{slide equivalene}.
    \]
\end{proposition}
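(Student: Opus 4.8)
The plan is to make the natural map explicit and then check it is a well-defined bijection. To a braided surface $S$ of degree $m$ we attach, after fixing a Hurwitz arc system $\mathcal{A}$ with starting point set $\Sigma(S)$, the braid system $(\rho_S(\gamma_1),\dots,\rho_S(\gamma_n))$ determined by the associated Hurwitz loop system $(\gamma_1,\dots,\gamma_n)$. By Rudolph's realization result already quoted (an $n$-tuple of $m$-braids is a braid system if and only if each entry is a conjugate of $\sigma_1^{\pm 1}$), the induced map on slide equivalence classes is surjective, so the remaining work is to prove that the slide equivalence class does not depend on the choices made (well-definedness) and that it separates braided surfaces up to equivalence (injectivity).

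\emph{Well-definedness.} Two sources of choice must be absorbed into slide equivalence. First, for a fixed $S$: any two Hurwitz arc systems with starting point set $\Sigma(S)$ are related by a finite sequence of elementary modifications, each of which changes the associated Hurwitz loop system by a Hurwitz move; applying $\rho_S$, such a move replaces the braid system $(\beta_1,\dots,\beta_n)$ by $(\dots,\beta_i\beta_{i+1}\beta_i^{-1},\beta_i,\dots)$ or $(\dots,\beta_{i+1},\beta_{i+1}^{-1}\beta_i\beta_{i+1},\dots)$, i.e.\ by a slide action. This is the standard uniqueness-up-to-Hurwitz-moves statement for arc systems, which I would quote (from Birman's book or Kamada's monograph) rather than reprove. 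Second, if $S$ and $S'$ are equivalent via $(\{\Phi_t\},\{h_t\})$, then $\Phi_t|_{D^2\times\{y_0\}}=\id$ forces $h_t(y_0)=y_0$, so $\{h_t\}$ is an isotopy of $B^2$ fixing $y_0$ and carrying $\Sigma(S)$ onto $\Sigma(S')$; pushing $\mathcal{A}$ forward by $h_1$ yields a Hurwitz arc system for $S'$, and since $\Phi_1$ intertwines $\pi_S$ with $\pi_{S'}$ and restricts to the identity on the fibre over $y_0$, the two braid systems literally coincide. Combining the two points, the slide equivalence class of the braid system depends only on the equivalence class of $S$.

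\emph{Injectivity.} I would argue through a normal form. By the realization result, for each braid system $(\beta_1,\dots,\beta_n)$ we may fix one braided surface $S_0=S_0(\beta_1,\dots,\beta_n)$ whose branch locus is a prescribed set $\Sigma$ carrying a prescribed Hurwitz arc system $\mathcal{A}$ and whose braid system relative to $\mathcal{A}$ is $(\beta_1,\dots,\beta_n)$; concretely one takes $m$ parallel trivial sheets over $B^2\setminus\bigcup_i\Int N_i$ and inserts a standard simple branch point over each $N_i$. The crux is to show that an arbitrary braided surface $S$ with branch locus $\Sigma$, arc system $\mathcal{A}$, and braid system $(\beta_1,\dots,\beta_n)$ is equivalent to $S_0(\beta_1,\dots,\beta_n)$: cutting $B^2$ along thickenings of the arcs of $\mathcal{A}$ produces a $2$-disk over which $\pi_S$ has no branch points and is pinned over $y_0$, so on this piece $S$ is equivalent to the trivial braided surface $X_m\times(\text{disk})$ — the basic triviality statement for branch-point-free braided surfaces over a disk — and, arranging the same for $S_0$, one matches $S$ with $S_0$ over the cut disk; the two then differ only over the $N_i$, where each carries a simple branch point with identical local monodromy $\beta_i$ and hence agree by the uniqueness of that local model. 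Finally, if $S$ and $S'$ have slide equivalent braid systems, then — realizing slide actions by re-choices of the arc system and by ambient isotopies of $B^2$, exactly as in the well-definedness step — after such changes they share a common braid system, so both are equivalent to the same $S_0$, and therefore to each other.

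The main obstacle is the injectivity reconstruction, namely the claim that an embedded simple braided surface over $B^2$ is rigidly determined by its branch locus, a Hurwitz arc system, and the resulting braid system. This rests on two points that must be handled with care: that over a simply connected base a branch-point-free (simple) braided surface with the standard pinning over $y_0$ is equivalent to the trivial one, and that near a simple branch point the embedded local picture depends only on the conjugacy datum of $\sigma_1^{\pm1}$ recorded by $\beta_i$. These are precisely the facts Rudolph establishes in \cite{Rudolph1983} (and which are repackaged cleanly in Kamada's chart calculus for braided surfaces), so in a self-contained write-up I would either invoke that analysis directly or carry out the cut-and-reglue normal form in detail; the rest is bookkeeping with Hurwitz moves.
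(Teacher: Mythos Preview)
The paper does not give its own proof of this proposition: it is stated with the citation \cite{Rudolph1983} and used as a black box. There is therefore nothing in the paper to compare your argument against.

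That said, your outline is the standard one and matches how Rudolph (and later Kamada, in the chart-description language) actually establish the correspondence: assign to $S$ its braid monodromy along a Hurwitz loop system; absorb the choice of Hurwitz arc system into slide actions; absorb the equivalence isotopy $(\{\Phi_t\},\{h_t\})$ by pushing the arc system forward via $h_1$; obtain surjectivity from the realization statement already quoted in the paper; and recover injectivity by the cut-and-reglue normal form, using that a branch-point-free braided surface over a disk pinned at $y_0$ is trivial and that the local model at a simple branch point is determined by the local monodromy. You have correctly flagged the two delicate points (triviality over a simply connected base, uniqueness of the simple-branch-point local model) and correctly located them in Rudolph's analysis. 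So your proposal is sound; it is just more than the paper itself supplies.
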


The braid group $B_m$ acts on the free group $F_m$ as follows:
The braid group $B_m$ is identified with the mapping class group $\mathrm{MCG}_\partial(D^2, X_m)$, the group of isotopy classes of homeomorphisms $\phi: D^2\setminus X_m \to D^2\setminus X_m$ that fix $\partial D^2$ pointwise.
An element $b \in B_m$ corresponds to an isotopy class $[\phi]$, and the action of $[\phi]$ on an element $[\gamma] \in \pi_1(D^2\setminus X_m)$ is defined as $[\phi \circ \gamma]$.

A Hurwitz arc system in $D^2$ with the starting point set $X_m$ induces generators $x_1, \dots, x_m$ for $\pi_1(D^2\setminus X_m)$, making it the free group $F_m$ on $x_1, \dots, x_m$.
The action of $B_m$ on $F_m$ is defined similaly via homeomorphisms $\phi$ as desctibed above.
Explicitly, the images of $\sigma_i \cdot x_j$ and $\sigma_i^{-1} \cdot x_j$ are given by:
\begin{align*}
    \sigma_i \cdot x_j ~=~ \begin{cases}
        x_i \, x_{i+1} \, x_i^{-1} &(j = i),\\
        x_i     &(j = i+1),\\
        x_j     &(\mbox{otherwise}),
    \end{cases} \qquad
    \sigma_i^{-1} \cdot x_j ~=~ \begin{cases}
        x_{i+1}     &(j = i),\\
        x_{i+1}^{-1} \, x_{i} \, x_{i+1} &(j = i+1),\\
        x_j     &(\mbox{otherwise}).
    \end{cases}
\end{align*}
For braids $b_1, b_2 \in B_m$, the action of $(b_1 b_2)$ on $w \in F_m$ is given by $(b_1 b_2)\cdot w = b_2 \cdot (b_1 \cdot w)$.

\subsection{The plat closure of braided surfaces}\label{Subsection: The plat closure of surface-links}


A \textit{wicket} \cite{Brendle-Hatcher2008} is a semi-circle in $D^2 \times I$ that meets $D^2 \times \{0\}$ orthogonally at its endpoints in $\mathrm{Int}D^2$.
A \textit{configuration of $m$ wicket} is a disjoint union of $m$ wickets.
Let $w_0$ be the configuration of $m$ wickets such that each wicket has the boundary $\{x_{2i-1}, x_{2i}\} \subset X_{2m}$ ($i \in \{1, \dots, m\}$), and let $\mathcal{W}_m$ denote the space consisting of configurations of $m$-wickets.
For a loop $f: (I, \partial I) \to (\mathcal{W}_m, w_0)$, the $2m$-braid $\beta_f$ is defined as
\[
    \beta_f ~=~ \bigcup_{t \in I} \partial f(t) \times \{t\} ~\subset~ (D^2 \times \{0\}) \times I = D^2\times I.
\]
A $2m$-braid $\beta$ is \textit{adequate} if $\beta = \beta_f$ for some loop $f: (I, \partial I) \to (\mathcal{W}_m, w_0)$.

The \textit{Hilden subgroup} $K_{2m}$ of $B_{2m}$ is the subgroup generated by $\sigma_1$, $\sigma_2 \sigma_1 \sigma_3 \sigma_2$ and $\sigma_{2k} \sigma_{2k-1} \sigma^{-1}_{2k+1} \sigma^{-1}_{2k}$ for $k \in \{1, \dots, m-1\}$.
We remark that $\sigma_{2j-1}$ and $\sigma_{2k} \sigma_{2k-1} \sigma_{2k+1} \sigma_{2k}$ also belong to $K_{2m}$ for $j \in \{1, \dots, m \}$ and $k \in \{1, \dots, m-1\}$.
Brendle-Hatcher \cite{Brendle-Hatcher2008} proved that $K_{2m}$ is precisely the subgroup generated by adequate $2m$-braids.

We assume that $B^2$ is the unit disk on $\C$ with $y_0 = 1 \in \partial B^2$ and $D^2 \subset \R^2$.
We identify $\C$ with $\R^2$ so that $D^2 \times B^2 \subset \R^4$.
For a braided surface $S$ of degree $m$, the $m$-braid $\beta_S$ are defined as follows:
\[
    \beta_S = \bigcup_{t \in I} \mathrm{pr_1}(\pi_S^{-1}(e^{2\pi \sqrt{-1}t})) \times \{t\} \subset D^2 \times I.
\]
We call $\beta_S$ the \textit{boundary braid} of $S$.
A braided surface is \textit{adequate} if the boundary braid $\beta_S$ is adequate.
We remark that the degree of an adequate braided surface is even.

Let $S$ be an adequate braided surface of degree $2m$, and we define the plat closure of $S$ as follows:
For $t \in I = [0,1]$, we define an interval $J_t = \{ re^{2\pi \sqrt{-1}t} \in \C ~|~ 1 \leq r \leq 2\}$, where $J_0 = J_1$.
The union of $J_t$ ($t \in I$) is an annulus with the boundary containing $\partial B^2$.
Since $\beta_S$ is adequate, there exists a unique loop $f: (I, \partial I) \to (\mathcal{W}_m, w_0)$ such that $\beta_S = \beta_f$.
For each $t \in I$, we define $w_t$ as the configuration of $m$ wickets $f(t)$ in $D^2 \times J_t$ such that $\partial w_t \subset D^2 \times \{e^{2\pi \sqrt{-1}t}\}$.
Then, the union of $w_t$ ($t \in I$), denoted by $A_S$, is the compact surface embedded in $(D^2\times B^2)^c = \R^4 \setminus \Int(D^2\times B^2)$ such that $\partial A_S = \partial S = S \cap A_S$.

\begin{definition}
    The \textit{plat closure} of $S$ is the union of $S$ and $A_S$, denoted by $\widetilde{S}$.
\end{definition}

\begin{proposition}[\cite{Yasuda21}]\label{theorem: plat form presentation}
    Every surface-link is ambiently isotopic to the plat closure of some braided surface.
\end{proposition}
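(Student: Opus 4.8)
The plan is to reduce $F$ to a combinatorial normal form, braid that normal form into a simple branched covering of a disk capped off by wickets, and observe that this is exactly a plat closure; the adequacy of the boundary braid will be forced by the triviality of the capping links.

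\textbf{Step 1 (normal form).} First I would put $F$ into a hyperbolic splitting (Kawauchi--Shibuya--Suzuki) with respect to a generic height function $h\colon\R^4\to\R$: $F$ is ambiently isotopic to a surface-link $F_0$ all of whose minimal points lie on $\{h=-1\}$, all of whose saddle points lie on $\{h=0\}$, and all of whose maximal points lie on $\{h=1\}$. Then $F_0\cap\{h\le 0\}$ is a union of disks spanning a trivial link $L_-\subset\{h=-\varepsilon\}$, $F_0\cap\{h\ge 0\}$ a union of disks spanning a trivial link $L_+\subset\{h=+\varepsilon\}$, and $F_0\cap\{-\varepsilon\le h\le\varepsilon\}$ is the trace of a system of bands $\mathcal B$ (some possibly half-twisted, which is how the non-orientable components arise) turning $L_-$ into $L_+$. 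Thus $F_0$ is encoded by a band description $(L_-;\mathcal B)$ whose two ends $L_\pm$ are unlinks.

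\textbf{Step 2 (braiding into plat position).} Next I would braid the whole band description; this is the $2$-dimensional analogue of presenting a link as the plat closure of a braid. Using the parametrised/relative form of Alexander's theorem underlying the braiding of surface-links (Kamada) --- which braids a link together with its attached bands about a fixed unknotted axis --- together with the standard bridge-position normalisation of the two unlinks $L_\pm$ and their spanning disks, I can isotope $F_0$ ambiently in $\R^4$ so that it decomposes as $S\cup A$ where, for the projection $\mathrm{pr}_2\colon D^2\times B^2\to B^2$, the piece $S:=F_0\cap(D^2\times B^2)$ is a simple branched covering of $B^2$, i.e.\ a simple braided surface of some even degree $2m$, and $A$ is a configuration of $m$ wickets sweeping out a surface over a collar of $\partial B^2$. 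The branch points of $S$ are created by the bands/saddles of the band description, and by Rudolph's criterion recalled in Section~\ref{Subsection: Braids and braided surfaces} the braid monodromy around each of them is automatically a conjugate of $\sigma_1$ or $\sigma_1^{-1}$; a half-twisted band has the same local model, so the non-orientable components cause no difficulty. Since $A$ is a genuine wicket configuration, the boundary braid $\beta_S$ equals $\beta_f$ for a wicket loop $f$; thus $\beta_S$ is adequate (in particular $\beta_S\in K_{2m}$) and $A=A_S$. Hence $F_0=S\cup A_S=\widetilde S$, so $F$, being ambiently isotopic to $F_0$, is ambiently isotopic to the plat closure of the braided surface $S$.

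\textbf{Where the difficulty lies.} The real content is Step 2, in particular arranging the braiding so that the \emph{outside} piece is returned to a genuine wicket configuration rather than left as an uncontrolled tangle: a naive braiding also braids the spanning disks of $L_\pm$, and one must show those disks can always be pushed back into wicket position --- which is exactly the statement that the resulting boundary braid is adequate, and is where the triviality of $L_\pm$ (so that their spanning disks can be standardised) is indispensable; it also forces the degree to be even. By comparison, the non-orientable bookkeeping --- verifying that half-twisted bands still produce $\sigma_1^{\pm1}$-type branch points, so that the branched-covering description is unaffected --- is routine once the orientable case is set up.
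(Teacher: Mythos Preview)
The paper does not prove this proposition: it is quoted from the author's earlier work \cite{Yasuda21} and no argument is given here, so there is nothing in the present paper to compare your attempt against.

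That said, your outline is the right shape for such a proof and matches the standard route one would expect: put the surface-link into a banded-unlink (hyperbolic) normal form, then braid the banded unlink about an axis so that the saddle bands become the branch points of a simple braided surface and the two trivial cap systems become wicket configurations. You have also correctly located the only non-routine point, namely forcing the exterior piece back into genuine wicket form so that the boundary braid is adequate; everything else (including the half-twisted bands for non-orientable components) is local. What you have written is an honest strategy sketch rather than a proof: Step~2 as stated defers all the work to an unspecified ``parametrised/relative Alexander theorem'' together with a ``standard bridge-position normalisation'', and it is precisely the compatibility of those two moves --- braiding the bands while simultaneously keeping the spanning disks of $L_\pm$ in wicket form throughout --- that requires an actual argument. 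Since the present paper offers no proof either, this is not a discrepancy with the paper, only a reminder that your Step~2 is where the content of \cite{Yasuda21} lives.
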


\begin{proposition}[\cite{Yasuda24}]\label{Proposition: knot group of the plat form}
    Let $S$ be an adequate braided surface of degree $2m$, and let $(\beta_1, \dots, \beta_n)$ a braid system of $S$ such that $\beta_i = b_i^{-1} \sigma_1^{\,\varepsilon_i} b_i$ ($\varepsilon_i \in \{ 1, -1\}$).
    Then, the knot group of the plat closure $\widetilde{S}$ has a $(2m,n)$-presentation with inverses associated with $b_1, \dots, b_n$:
   \[
        \left\langle
        \begin{array}{c|c}
            x_1, \ldots, x_{2m} &
            b_i\cdot x_{1} = b_i\cdot x_{2}, \quad x_{2j-1}=x_{2j}^{-1} \quad (i \in \{1,\ldots,n\}, j \in \{1,\ldots,m\})
        \end{array}
      \right\rangle.
     \]
\end{proposition}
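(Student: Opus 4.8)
The plan is to compute the knot group $\pi_1(\R^4\setminus\widetilde S)$ by a van Kampen argument, splitting $S^4$ (the complement of $\widetilde S$ in $S^4$ has the same fundamental group as in $\R^4$) along the boundary $3$-sphere $W$ of the tube $V:=D^2\times B^2$. Write $S^4=V\cup V'$, where $V'$ is the closure of the complement of $V$ (a $4$-ball after adjoining $\infty$) and $W=\partial V=\partial V'$. By construction $\widetilde S=S\cup A_S$ with $S\subset V$ and $A_S\subset V'$ (the latter being well defined precisely because $S$ is adequate), and $S\cap A_S=\partial S=\partial A_S$ is the closed braid in $D^2\times\partial B^2\subset W$ obtained by closing the boundary braid $\beta_S$. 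Thickening the pieces, van Kampen will give
\[
\pi_1(S^4\setminus\widetilde S)\;\cong\;\pi_1(V\setminus S)\ \ast_{\pi_1(W\setminus\partial S)}\ \pi_1(V'\setminus A_S),
\]
where $W\setminus\partial S$ is connected with $\pi_1$ generated by the $2m$ meridians $x_1,\dots,x_{2m}$ of the strands of $\partial S$ taken over the base fibre $D^2\times\{y_0\}$.

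For the inside piece I would slice $V\setminus S$ by $\mathrm{pr}_2$ over $B^2$ and read off the braid-monodromy presentation of the complement of a braided surface: the fibre $D^2\setminus X_{2m}$ supplies the free generators $x_1,\dots,x_{2m}$, and each simple branch point of $\pi_S$ contributes exactly one relation, namely the $b_i$-transport of the local fold relation $x_1=x_2$. A direct computation with the $B_{2m}$-action on $F_{2m}$ --- using $\sigma_1\cdot x_1=x_1x_2x_1^{-1}$, $\sigma_1\cdot x_2=x_1$, and $(b_1b_2)\cdot w=b_2\cdot(b_1\cdot w)$ --- shows that for a branch point with monodromy $b_i^{-1}\sigma_1^{\varepsilon_i}b_i$, and for either sign $\varepsilon_i$, this relation reads $b_i\cdot x_1=b_i\cdot x_2$; hence $\pi_1(V\setminus S)\cong\langle x_1,\dots,x_{2m}\mid b_i\cdot x_1=b_i\cdot x_2\ (i=1,\dots,n)\rangle$, with these $x_k$ restricting from the generators of $\pi_1(W\setminus\partial S)$. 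One also checks that the closed-braid relations $\beta_S\cdot x_k=x_k$ that hold in $\pi_1(W\setminus\partial S)$ are already consequences of the $b_i$-relations, so the amalgamation imports nothing further from $W$.

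For the outside piece, $A_S=\bigcup_{t\in I}w_t$ is, by its very construction from wickets meeting $D^2\times\{e^{2\pi\sqrt{-1}t}\}$ orthogonally, carried by a standard system of $m$ bands in a collar of $D^2\times\partial B^2$ inside $V'$, one band per wicket of $w_0$. I expect this to force $\pi_1(V'\setminus A_S)$ to be the quotient of $\pi_1(W\setminus\partial S)$ by the relations $x_{2j-1}=x_{2j}^{-1}$ ($j=1,\dots,m$), one per wicket, with the exponent $-1$ dictated by the co-orientations of the two strands a wicket joins. Feeding the two pieces into the pushout then yields $\pi_1(\R^4\setminus\widetilde S)\cong\pi_1(S^4\setminus\widetilde S)$ generated by $x_1,\dots,x_{2m}$ subject exactly to $b_i\cdot x_1=b_i\cdot x_2$ $(i=1,\dots,n)$ and $x_{2j-1}=x_{2j}^{-1}$ $(j=1,\dots,m)$, which is the asserted $(2m,n)$-presentation with inverses.

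The hard part is not the van Kampen assembly but the two local computations: (i) verifying that a simple branch point of $\pi_S$ contributes precisely the one relation $b_i\cdot x_1=b_i\cdot x_2$ with no hidden extra relation, which needs the correct local model $z^2=w$ for a fold and careful transport along the chosen Hurwitz arc system; and (ii) showing that the wicket surface $A_S$ sits unknottedly enough in $V'$ that its complement contributes only the relations $x_{2j-1}=x_{2j}^{-1}$, together with the orientation bookkeeping that pins the exponent to $-1$ rather than $+1$ --- the sign ultimately responsible for the non-orientable phenomena in the resulting surface-links. Everything else should be routine.
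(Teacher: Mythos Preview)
Your proposal is correct and follows essentially the same van Kampen decomposition as the paper's proof: split along the $3$-sphere $\partial(D^2\times B^2)$, identify $\pi_1(D^2\times B^2\setminus S)$ with the $(2m,n)$-presentation coming from the braid monodromy, identify $\pi_1\bigl((D^2\times B^2)^c\setminus A_S\bigr)$ with the quotient of $G(L)$ by the wicket relations $x_{2j-1}=x_{2j}^{-1}$, and amalgamate over $G(L)=\pi_1(\partial(D^2\times B^2)\setminus\partial S)$. The only cosmetic difference is that the paper outsources your two ``hard parts'' to citations---the inside computation to Kamada's braided-surface presentation \cite{Kamada1994-01} and the outside computation to \cite[Lemma~4.7]{Yasuda24}---whereas you sketch both directly; and the paper keeps the closed-braid relations $x_k=\beta_S\cdot x_k$ explicitly in $G(A_S)$ rather than noting, as you do, that they are redundant once the branch relations are imposed.
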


\begin{proof}
    Let $S$ be an adequate braided surface of degree $2m$ and $\beta_S$ the boundary braid of $S$.
    From \cite{Kamada1994-01}, the knot group $G(S) = \pi_1(D^2\times B^2 \setminus S)$ has $(2m,n)$-presentation
    \[
        G(S) ~=~
        \left\langle
        \begin{array}{c|c}
            x_1, \ldots, x_{2m} &
            b_i \cdot x_{1} = b_i \cdot x_{2}~(i \in \{1,\ldots,n\})
        \end{array}
        \right\rangle.
    \]
    Let $L$ denote the intersection of $S$ and $A_S$, which is a link in the 3-sphere $\partial (D^2\times B^2)$.
    Since $L$ is the closure of $\beta_S$, the knot group $G(L)$ has a presentation
    \[
        G(L) ~=~
        \left\langle
            \begin{array}{c|c}
                x_1, \ldots, x_{2m} &
                x_k = \beta_S \cdot x_k ~ (k = 1, \dots, 2m)
            \end{array}
        \right\rangle.
    \]
    By applying \cite[Lemma 4.7]{Yasuda24} for the case of groups, the knot group of $G(A_S) = \pi_1((D^2\times B^2)^c \setminus A_S)$ has a presentation
    \[
        G(A_S) ~=~
        \left\langle
        \begin{array}{c|c}
            x_1, \ldots, x_{2m} &
            x_k = \beta_S \cdot x_k, \quad
            x_{2j-1} = x_{2j}^{-1} ~ (k = 1, \dots, 2m, j = 1, \dots, m)
        \end{array}
        \right\rangle.
    \]

    Finally, the homomorphism $\iota_S: G(L) \to G(S)$ (or $\iota_{A_S}: G(L) \to G(A_S)$) induced from the natural inclusion map sends $x_k \in G(L)$ to $x_k \in G(S)$ (or $x_k \in G(A_S)$), respectively.
    Hence we obtain the presentation in Proposition~\ref{Proposition: knot group of the plat form} by van Kampen's theorem.
\end{proof}

\subsection{Proof of Theorem~\ref{Main theorem}} 
We divide the proof into two parts: the ``only if part'' and the ``if part''.

\noindent\textbf{(Proof of the only if part)}:
Let $G = G(F)$ be the knot group of a $(c, d)$-component genus $g$ surface-link $F$.
From Theorem~\ref{theorem: plat form presentation}, $F$ is ambiently isotopic to the plat closure of an adequate braided surface $S$ of degree $2m$.
Let $(\beta_1, \dots, \beta_n)$ be a braid system of $S$ such that $\beta_i = b_i^{-1} \sigma_1^{\,\varepsilon_i} b_i$ ($\varepsilon_i \in \{ 1, -1\}$).
By Proposition~\ref{Proposition: knot group of the plat form}, $G$ admits a $(2m, n)$-presentation with inverses associated with $b_1, \dots, b_n$.
Since $\beta_S$ is adequate, the $(2m, n)$-presentation with inverses of $G$ satisfies the weak $\partial$-condition:
\begin{align*}
    \prod_{i=1}^n b_i^{-1} \sigma_1^{\,\varepsilon_i} b_i ~&=~ \prod_{i=1}^n \beta_i ~=~ \prod_{i=1}^n \rho_S\left(\left[\gamma_i\right]\right) ~=~ \rho_S([\prod_{i=1}^n \gamma_i]) ~=~ \rho_S([\partial B^2])\\
        &=~ \beta_S ~ \in ~ K_{2m},
\end{align*}
where $(\gamma_1, \dots, \gamma_n)$ is a Hurewitz loop system in $B^2$.
It holds that $\chi(\widetilde{S}) = 2m - n$ so that $G$ satisfies the condition (2).
Using the Alexander Duality Theorem, we know that $H_1(\R^4 \setminus F) \cong \Z^{c} \oplus (\Z/2)^{d}$.
Since $H_1(\R^4\setminus F)$ is the abelianization of $G(F)$, $G$ satisfies the condition (3).

\noindent\textbf{(Proof of the if part)}:
Suppse $G$ is a group satisfying the conditions (1) -- (3) of Theorem~\ref{Main theorem} for some $m,n \geq 0$.
Specifically, $G$ admits a $(2m, n)$-presentation with inverses associated with $2m$-braids $b_1, \dots, b_n$, and there exists signs $\varepsilon_1, \dots, \varepsilon_n$ such that
\begin{align*}
    \prod_{i=1}^n b_i^{-1} \sigma_1^{\,\varepsilon_i} b_i ~\in~ K_{2m}.
\end{align*}
Let $S$ be a braided surface of degree $2m$ associated with a braid system $(\beta_1, \dots, \beta_n)$ with $\beta_i = b_i^{-1} \sigma_1^{\,\varepsilon_i} b_i$.
Since $\beta_S = \prod_{i=1}^n b_i^{-1} \sigma_1^{\,\varepsilon_i} b_i \in K_{2m}$, $S$ can be chosen as an adequate braided surface.
We define the surface-link $F$ as the plat closure of $S$.
By Proposition~\ref{Proposition: knot group of the plat form}, the knot group of $F$ is isomorphic to $G$.
Furthermore, $F$ is $(c,d)$-component and genus $g$ because
\begin{align*}
    H_1(\R^4\setminus F) \cong G/[G,G] \cong \Z^{c} \oplus (\Z/2)^{d} \quad \mbox{and} \quad \chi(\widetilde{S}) = 2m-n = 2(c+d)-g.
\end{align*}
This completes the proof of Theorem~\ref{Main theorem}.
\qed

The author \cite{Yasuda21} also showed that every orientable surface-link is ambiently isotopic to the plat closure of a 2-dimensional braid.
Consequently, we have a similar characterization of knot groups of orientable surface-links:

\begin{theorem}\label{Theorem: Characterization of knot grp in genuine plat form}
    A group $G$ is the knot group of a $c$-component orientable surface-link with the Euler characteristic $\chi$ if and only if $G$ satisfies the following conditions for some $m, n \geq 0$:
    \begin{enumerate}
        \item $G$ has an $(2m,n)$-presentation with inverses satisfying the $\partial$-condition.
        \item It holds that $\chi = 2m - n$.
        \item $G/[G,G]$ is isomorphic to $\Z^{c}$.
    \end{enumerate}
\end{theorem}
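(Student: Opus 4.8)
The plan is to run the argument of Theorem~\ref{Main theorem} verbatim, only replacing the plat presentation by arbitrary braided surfaces with the plat presentation by \emph{2-dimensional} braids provided by \cite{Yasuda21}. The single new observation needed is that a 2-dimensional braid $S$ of degree $2m$ has $\partial S = X_{2m}\times\partial B^2$, so its boundary braid is the trivial braid $1_{2m}$. Consequently $S$ is an adequate braided surface (the trivial braid is $\beta_f$ for the constant loop at $w_0$), Proposition~\ref{Proposition: knot group of the plat form} applies to it, and, writing a braid system as $\beta_i = b_i^{-1}\sigma_1^{\varepsilon_i}b_i$, the identity $\prod_{i=1}^n b_i^{-1}\sigma_1^{\varepsilon_i}b_i = \rho_S([\partial B^2]) = \beta_S = 1_{2m}$ upgrades the weak $\partial$-condition to the $\partial$-condition. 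Everything else about Euler characteristic and homology is unchanged.

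For the only-if part I would take $G=G(F)$ for a $c$-component orientable surface-link $F$ with $\chi(F)=\chi$. By \cite{Yasuda21}, $F$ is ambiently isotopic to the plat closure $\widetilde{S}$ of a 2-dimensional braid $S$ of degree $2m$. Choosing a braid system $(\beta_1,\dots,\beta_n)$ of $S$ with $\beta_i = b_i^{-1}\sigma_1^{\varepsilon_i}b_i$, Proposition~\ref{Proposition: knot group of the plat form} yields a $(2m,n)$-presentation with inverses of $G$ associated with $b_1,\dots,b_n$, and the computation above shows it satisfies the $\partial$-condition; this is (1). Condition (2) is $\chi=\chi(\widetilde{S})=2m-n$, exactly as before, and (3) follows because $F$ is orientable, so Alexander duality gives $H_1(\R^4\setminus F)\cong\Z^c$ with no torsion, and this group is the abelianization of $G$.

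For the if part, the cleanest route is to note that the $\partial$-condition implies the weak $\partial$-condition since $1_{2m}\in K_{2m}$, and that $G/[G,G]\cong\Z^c$ is the case $d=0$ of $\Z^c\oplus(\Z/2)^d$; hence Theorem~\ref{Main theorem} already produces a $(c,0)$-component surface-link of Euler characteristic $\chi$ with knot group $G$, i.e. a $c$-component orientable one. If one instead wants the surface-link in genuine plat form, build a braided surface $S$ of degree $2m$ from the braid system $\beta_i=b_i^{-1}\sigma_1^{\varepsilon_i}b_i$; since the product of the $\beta_i$ is $1_{2m}$, the boundary braid of $S$ is trivial and $S$ may be taken to be a 2-dimensional braid, so $F:=\widetilde{S}$ is the plat closure of a 2-dimensional braid with $G(F)\cong G$, $\chi(F)=2m-n$, and $H_1(\R^4\setminus F)\cong\Z^c$, which forces $F$ to be $c$-component orientable. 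The only point requiring care is this last claim — that a braided surface whose braid system has trivial product can be realized by a 2-dimensional braid, the analogue of ``$S$ can be chosen as an adequate braided surface'' in the proof of Theorem~\ref{Main theorem} — and since it is not needed when the if part is deduced from Theorem~\ref{Main theorem}, the theorem follows with essentially no work beyond combining Theorem~\ref{Main theorem}, Proposition~\ref{Proposition: knot group of the plat form}, and \cite{Yasuda21}.
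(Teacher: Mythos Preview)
Your proposal is correct and follows exactly the approach the paper intends: the paper gives no explicit proof of Theorem~\ref{Theorem: Characterization of knot grp in genuine plat form}, but the sentence immediately preceding it (``every orientable surface-link is ambiently isotopic to the plat closure of a 2-dimensional braid \cite{Yasuda21}. Consequently, \ldots'') signals precisely the argument you wrote out --- rerun the proof of Theorem~\ref{Main theorem} with 2-dimensional braids in place of adequate braided surfaces, so that $\beta_S = 1_{2m}$ and the weak $\partial$-condition becomes the $\partial$-condition. Your observation that the if part can be read off directly from Theorem~\ref{Main theorem} (since $1_{2m}\in K_{2m}$ and $d=0$) is a clean shortcut that the paper does not spell out but is entirely in its spirit.
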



\section{Quandles and symmetric quandles}\label{Section: Quandles and symmetric quandles}
\subsection{Quandles}\label{Subsection: Quandles}
A \textit{quandle} \cite{Joyce82,Mateev84} is a set $Q$ with a binary operation $*: Q\times Q \to Q$ on $Q$ satisfying the following three axioms:
\begin{enumerate}
    \item[(Q1)] For any $a \in Q$, we have $a*a = a$.
    \item [(Q2)] For any $a, b \in Q$, there exists a unique element $c \in Q$ such that $c*b = a$.
    \item[(Q3)] For any $a, b, c \in Q$, we have $(a*b)*c = (a*c) * (b*c)$.
\end{enumerate}
A \textit{rack} \cite{Fenn-Rourke1992} is a set $Q$ with a binary operation on $Q$ satisfying (Q2) and (Q3).

A quandle homomorphism is a map $f: Q \to Q'$ such that $f(a*b) = f(a)*f(b)$ for any $a, b \in Q$.
For $b \in Q$, the map $S_b: Q \to Q$ sending $a \in Q$ to $a*b \in Q$ is a quandle homomorphism.
Moreover, $S_b$ is a quandle automorphism.
The \textit{inner automotphism group} of $Q$, denoted by $\mathrm{Inn}(Q)$, is the subgroup of $\mathrm{Aut}(Q)$ generated by $S_b$ for all $b \in Q$.
A quandle $Q$ is called  \textit{connected} if $\mathrm{Inn}(Q)$ acts transitively on $Q$.
Similaly, an orbit of $Q$ under the action of $\mathrm{Inn}(Q)$ is called a \textit{connected component} of $Q$.

For a quandle $(Q, *)$, we define the binary operation $\overline{*}$ on $Q$ by $a\overline{*}b = c$, where $c \in Q$ is a unique element obtained from the axiom (Q2).
Then $(Q, \overline{*})$ is again a quandle.
We call $\overline{*}$ the \textit{dual operation} of $*$.

When expressing elements of $Q$, we sometimes use the Fenn-Rourke notation \cite{Fenn-Rourke1992}:
Let $\mathrm{F}(Q)$ denote the free group generated by elements of $Q$.
For $a, b \in Q$, the symbol $a^b$ denotes $a*b$, and for a word $w = b_1^{\varepsilon_1} b_2^{\varepsilon_2} \cdots b_n^{\varepsilon_n} \in \mathrm{F}(Q)$, the symbols $a^w$ denotes the element $(\cdots((a*^{\varepsilon_1} b_1)*^{\varepsilon_2} b_2 ) * \cdots)*^{\varepsilon_n} b_n \in Q$, where $*^{1} = *$ and $*^{-1} = \overline{*}$.

\begin{example}[Free quandle]
    Let $A$ be a non-empty set.
    Then $\mathrm{FR}(A) = A \times \mathrm{F}(A)$ is a rack by a binary operation $(x, w) * (y, u) = (x, w u^{-1} y u)$ for $(x, w), (y, u) \in \mathrm{FR}(A)$.
    We call $\mathrm{FR}(A)$ the \textit{free rack} on $A$.

    Let $\sim_q$ be the equivalence relation on $\mathrm{FR}(A)$ generated by $(x, w) \sim_q (x, xw)$ for $x \in A$ and $w \in \mathrm{F}(A)$.
    Then the rack operation on $\mathrm{FR}(A)$ induces a quandle operation on $\mathrm{FQ}(A) := \mathrm{FR}(A)/\sim_q$.
    We call $\mathrm{FQ}(A)$ the \textit{free quandle} on $A$.
    We write $x^w$ for $(x, w) \in \mathrm{FQ}(A)$ and $x$ for $(x,1)$, where $1 \in \mathrm{F}(A)$ is the unit element.

    For a subset $R$ of $\mathrm{FQ}(A) \times \mathrm{FQ}(A)$, the quandle $\left\langle A ~|~ R \right\rangle_{q}$ is defined in a manner similar to the case of group presentations.
    The quandle $\left\langle A ~|~ R \right\rangle_{q}$ is called a \textit{quandle presentation} of a quandle $Q$ if $Q$ is isomorphic to $\left\langle A ~|~ R \right\rangle_{q}$.
    (See \cite{Fenn-Rourke1992} for details.)
\end{example}

\begin{example}[Knot quandle]
    Let $K$ be a properly embedded oriented $n$-submanifold in an oriented $(n+2)$-manifold $M$, $N(K)$ a regular neighborhood of $K$ in $M$, and $E(K) = \mathrm{cl}(M \setminus N(K))$.
    We take a based point $p$ of $E(K)$.
    A \textit{noose} of $K$ is a pair $(D,\alpha)$ of a meridional disk $D$ of $K$ and an oriented arc $\alpha$ in $E(K)$ connecting from a point of $\partial D$ to $p$.
    Since $K$ and $M$ are oriented, the meridional disk $D$ can be oriented such that the tuple of orientations of $K$ and $D$ represents that of $M$.
    The induced orientation of $\partial D$ from that of $D$ is called the \textit{positive direction} of $\partial D$.
    We denote by $Q(M, K, p)$ the set of homotopy classes $[(D, \alpha)]$ of all noose of $K$.
    The \textit{knot quandle} (or \textit{fundamental quandle}) of $K$ is $Q(M, K, p)$ with the binary operation $*$ defined by
    \[
        [(D, \alpha)] * [(D', \alpha')] = [(D, \alpha\cdot \alpha'^{-1} \partial D' \alpha')],
    \]
    where $\partial D'$ is a meridional loop starting from the initial point of $\alpha'$ and going along $\partial D'$ in the positive direction.
    See Figure~\ref{Figure: operation of knot quandle} for $n = 1$.
    We notice that the knot quandle is independent of the choice of the based point $y$ when $M$ is connected,
    Then we denote it by $Q(M, K)$ or $Q(K)$ simply.
    It is known that the numbers of connected components of $K$ and $Q(K)$ are equal.

    \begin{figure}[h]
        \centering
        \includegraphics[width = 0.6\hsize]{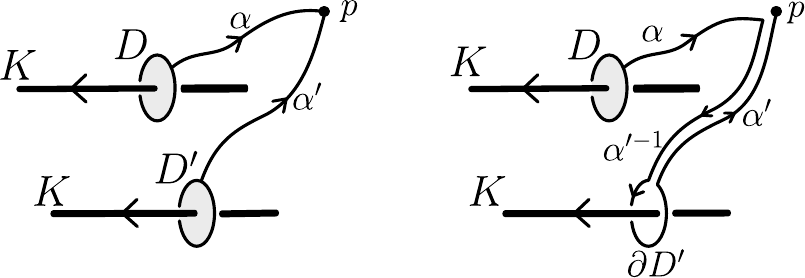}
        \caption{Nooses of $K$ (left) and an operation result of the knot quandle (right)}
        \label{Figure: operation of knot quandle}
    \end{figure}

    We identify the braid group $B_m$ with the mapping class group $\mathrm{MCG}_\partial(D^2, X_m)$.
    For $b = [\phi] \in \mathrm{MCG}_\partial(D^2, X_m)$ and $x = [(D, \alpha)] \in Q(D^2, X_m)$, we define $b\cdot x = [(\phi(D), \phi(\alpha))] \in Q(D^2, X_m)$.
    Then, this gives an action on the knot quandle $Q(D^2, X_m)$ by the braid group $B_m$.

    The knot quandle $Q(D^2, X_m)$ is isomorphic to the free quandle on $m$ letters.
    Precisely, a Hurwitz arc system in $D^2$ with the starting point set $X_m$ induces $m$ nooses of $X_m$ such that each noose has a meridional disk of $x_i \in X_m$ ($i = 1, \dots, m$).
    We denote homotopy classes of these nooses by same symbols $x_1, \dots, x_m$.
    Then, the free quandle $\mathrm{FQ}(X_m)$ is isomorphic to $Q(D^2, X_m)$ by the isomorphism sending $x_i \in \mathrm{FQ}(X_m)$ to $x_i \in Q(D^2, X_m)$.
    Therefore, we obtain the action on $\mathrm{FQ}(X_m)$ by $B_m$ from $Q(D^2, X_m)$.
    Explicitly, we have
    \begin{align*}
        \sigma_i \cdot x_j ~=~ \begin{cases}
            x_{i+1} \,\overline{*}\, x_i &(j = i),\\
            x_i     &(j = i+1),\\
            x_j     &(\mbox{otherwise}).
        \end{cases} \qquad
        \sigma_i^{-1} \cdot x_j ~=~ \begin{cases}
            x_{i+1}     &(j = i),\\
            x_{i} * x_{i+1} &(j = i+1),\\
            x_j     &(\mbox{otherwise}).
        \end{cases}
    \end{align*}
\end{example}

\subsection{Symmetric quandles}\label{Subsection: Symmetric quandles}
A \textit{good involution} of a quandle $Q$ is an involution $\rho: X \to X$ (i.e., $\rho\circ\rho = \id_Q$) such that for any $a, b \in Q$, we have $\rho(a*b) = \rho(a) * b$ and $a*\rho(b) = a \overline{*} b$.
A \textit{symmetric quandle} is a pair $X = (Q, \rho)$ of a quandle $Q$ and its good involution $\rho$.
A symmetric quandle homomorphism from $(Q, \rho)$ to $(Q', \rho')$ is a quandle homomorphism $f: Q \to Q'$ such that $f \circ \rho = \rho' \circ f$.

\begin{example}[Dihedral quandles]\label{Example: dihedral quandle}
    The \textit{dihedral quandle} $R_n$ is $\Z/n\Z$ with $a*b = 2b-a$.
    Kamada and Oshiro \cite{Kamada-Oshiro2010} classified the good involutions of dihedral quandles:
    \begin{enumerate}
        \item
        When $n \equiv 1, 3 ~(\mathrm{mod}~4)$, the idetity map $\id_{R_n}$ is the only good involution of $R_n$.
        \item
        When $n \equiv 2 ~(\mathrm{mod}~4)$, the good involution is either the idetity map $\id_{R_n}$ or the \textit{antipodal map} $\rho_A$ which is defined by $\rho_A(a) = a + n/2$.
        \item
        When $n \equiv 0 ~(\mathrm{mod}~4)$, the good involution is either the idetity map $\id_{R_n}$, the antipodal map $\rho_A$, or one of the two types of \textit{half-antipodal maps}, denoted by $\rho_{HA}$ and $\rho'_{HA}$, which are defined by
        \begin{align*}
            \rho_{HA}(a) ~=~ \begin{cases}
                a + n/2 &(a\mbox{: even})\\
                a &(a\mbox{: odd})
            \end{cases}, \quad
            \rho'_{HA}(a) ~=~ \begin{cases}
                a &(a\mbox{: even})\\
                a + n/2 &(a\mbox{: odd})
            \end{cases}.
        \end{align*}
        We remark that $(R_{4n}, \rho_{HA})$ and $(R_{4n}, \rho'_{HA})$ are isomorphic as symmetric quandles.
    \end{enumerate}

    A \textit{kei} (or a \textit{involutory quandle}) is a quandle $Q$ such that $(a * b) *b = a$ for any $a, b \in Q$.
    The dihedral quandle is a kei.
    They also proved that $Q$ is a kei if and only if the identity map $\id_Q$ is a good involution of $Q$.
\end{example}

\begin{example}[Double of a quandle \cite{Kamada2006,Kamada-Oshiro2010}]\label{Example: the double of quandle}
    Let $Q$ be a quandle and $\overline{Q} = \{\overline{a} ~|~ a \in Q\}$ a copy of $Q$.
    We extend the binary operation $*$ on $Q$ into the disjoint union $D(Q) = Q \cup \overline{Q}$:
    \[
        a*\overline{b} := a\overline{*}b, \quad \overline{a}*b := \overline{a*b}, \quad \overline{a}*\overline{b} := \overline{a\overline{*}b}.
    \]
    Then $(D(Q), *)$ is a quandle.
    We define a good involution $\rho$ on $D(Q)$ by $\rho(a) = \overline{a}$.
    The symmetric quandle $(D(Q), \rho)$ is called the \textit{double} of $Q$.

    The \textit{free symmetric quandle} on a set $A$, denoted by $\mathrm{FSQ}(A)$, is the double of the free quandle $\mathrm{FQ}(A)$ on $A$.
    The underlying set of $\mathrm{FSQ}(A)$ is $(A \cup \overline{A}) \times \mathrm{F}(A)$ under the identification $\overline{a} \in \overline{A}$ with $a^{-1} \in \mathrm{F}(A)$.

    Oshiro showed \cite{Oshiro2020-preprint} that the double of $R_{2n+1}$ is the dihedral quandle $R_{4n+2}$ with the antipodal map.
\end{example}

\begin{example}[Knot symmetric quandles \cite{Kamada2006,Kamada-Oshiro2010}]
    Let $K$ be a properly embedded $n$-submanifold in an $(n+2)$-manifold $M$.
    We take a based point $p$ of $E(K)$.
    An \textit{oriented noose} of $K$ is a pair $(D,\alpha)$ of an oriented meridional disk $D$ of $K$ and an oriented arc $\alpha$ in $E(K)$ connecting from a point of $\partial D$ to $p$.

    $\widetilde{Q}(M, K, p)$ (or $\widetilde{Q}(K, p)$ simply) denotes the set of homotopy classes $[(D, \alpha)]$ of all oriented nooses of $K$.
    We remark that $[(D, \alpha)]$ and $[(-D, \alpha)]$ are different homotopy classes if $K$ is orientable, where $-D$ is $D$ with the reversed orientation.
    The \textit{full knot quandle} of $K$ is $\widetilde{Q}(M, K, p)$ with the binary operation $*$ defined by
    \[
        [(D, \alpha)] * [(D', \alpha')] = [(D, \alpha\cdot \alpha'^{-1} \partial D' \alpha')].
    \]
    We denote $\widetilde{Q}(M, K, p)$ by $\widetilde{Q}(K)$ simply when $M$ is connected.
    The good involution $\rho_K$ of $\widetilde{Q}(M, K)$ is defined by $\rho_K([(D, \alpha)]) = [(-D, \alpha)]$.
    The \textit{knot symmetric quandle} (or \textit{fundamental symmetric quandle}) of $K$, denoted by $X(M, K)$ (or $X(K)$ simply), is the pair of $\widetilde{Q}(K)$ and $\rho_K$.

    When $K$ is orientable, then $X(K)$ is the double of $Q(K)$.
    When $K$ is non-oriententable and connected, then $X(K)$ is also connected as a quandle.

    The knot symmetric quandle $X(D^2, X_m)$ is isomorphic to the free symmetric quandle $\mathrm{FSQ}(X_m)$.
    The action on the free quandle $\mathrm{FQ}(X_m)$ by the braid group $B_m$ extends the action on the free symmetric quandle $\mathrm{FSQ}(X_m)$ by $B_m$ defined by $b \cdot \overline{x_j} = \overline{b \cdot x_j}$.
\end{example}

\subsection{Symmetric quandle presentation}

In this section, we recall presentations of symmetric quandles.

Let $A$ be a non-empty set, and $R$ be a subset of $\mathrm{FSQ}(A) \times \mathrm{FSQ}(A)$.
Then, we define a subset $\langle \langle R \rangle \rangle_{\mathrm{sq}}$ of $\mathrm{FSQ}(A) \times \mathrm{FSQ}(A)$ by repeating the following moves:
\begin{enumerate}
    \item[(E1)] For every $x\in \mbox{FSQ}(A)$, add $(x,x)$ to $R$.
    \item[(E2)] For every $(x,y)\in R$, add $(y,x)$ to $R$.
    \item[(E3)] For every $(x,y), (y,z) \in R$, add $(x,z)$ to $R$.
    \item[(R1)] For every $(x,y)\in R$ and $a\in A$, add $(x^a,y^a), (x^{-a},y^{-a})$ to $R$.
    \item[(R2)] For every $(x,y)\in R$ and $t\in \mbox{FSQ}(A)$, add $(t^x,t^y), (t^{-x},t^{-y})$ to $R$.
    \item[(S)] For every $(x,y) \in R$, add $(x^{-1}, y^{-1})$ to $R$.
 \end{enumerate}

Then $\langle\langle R \rangle\rangle_{sq}$ is called the \textit{set of symmetric quandle consequences} of $R$, which is the smallest congruence containing $R$ with respect to the axioms of a symmetric quandle (\cite{Kamada2014}).
An element of $\langle\langle R \rangle\rangle_{sq}$ is called a \textit{consequence} of $R$.
Let $\langle A ~|~ R \rangle_{\mathrm{sq}}$ denote the quotient set $\mathrm{FSQ}(A)/ \langle\langle R \rangle\rangle_{sq}$.
The quandle operation and the good involution of $\mathrm{FSQ}(A)$ induce a quandle operation and a good involution on $\langle A ~|~ R \rangle_{\mathrm{sq}}$.

\begin{definition}
    The symmetric quandle $\langle A ~|~ R \rangle_{\mathrm{sq}}$ is a \textit{symmetric quandle presentation} of a symmetric quandle $X$ if $\langle A ~|~ R \rangle_{\mathrm{sq}}$ is isomorphic to $X$.
\end{definition}

In a symmetric quandle presentation, the symbol $\overline{x}$ denotes the image of $x$ by the good involution.
The notions and properties of symmetric quandle presentations are discussed in \cite{Kamada2014,Yasuda24}.

It is known that two finite presentations of a group are related by a finite sequence of Tietze transformations.
This result was extended for finite presentations of racks \cite{Fenn-Rourke1992} and of symmetric quandles \cite{Yasuda24}.

\begin{proposition}[\cite{Yasuda24}]\label{Proposition: }
    Any two finite presentations of a symmetric quandle are related by a finite sequence of Tietze transformations (T1) -- (T4):
    \begin{enumerate}
        \item[(T1)] Add a consequence $r \in \langle\langle R \rangle\rangle_{sq}$ to $R$.
        \item[(T2)] Delete a relator $r$ from $R$, where $r$ is a consequence of other relators in $R$.
        \item[(T3)] Add a new generator $x$ and a new relator $(x,r)$ to $A$ and $R$, respectively, where $r$ is an element of $F(A)$.
        \item[(T4)] Delete a generator $x$ and a relator $(x,r)$ from $A$ and $R$, respectively, where $(x,r)$ is a consequence of other relators and $x$ does not occur in other relators in $R$.
     \end{enumerate}
\end{proposition}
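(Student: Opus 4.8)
The plan is to transcribe the classical proof of Tietze's theorem, and Fenn--Rourke's version of it for racks \cite{Fenn-Rourke1992}, to the symmetric quandle setting. Suppose $\langle A \mid R \rangle_{\mathrm{sq}}$ and $\langle B \mid S \rangle_{\mathrm{sq}}$ are two finite presentations of one symmetric quandle $X$; after renaming we may assume $A \cap B = \emptyset$, and we fix a symmetric quandle isomorphism $\phi \colon \langle A \mid R \rangle_{\mathrm{sq}} \to \langle B \mid S \rangle_{\mathrm{sq}}$. For each $b \in B$ choose a word $u_b \in \mathrm{FSQ}(A)$ representing $\phi^{-1}(b)$, and for each $a \in A$ choose a word $v_a \in \mathrm{FSQ}(B)$ representing $\phi(a)$; since $\phi$ intertwines the good involutions, these choices are compatible with the identification $\overline{x} = x^{-1}$ in the free symmetric quandles. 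For $p \in \mathrm{FSQ}(A \cup B)$ write $p[b \mapsto u_b]$ for the word obtained by substituting $u_b$ for every occurrence of $b \in B$.

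The key preliminary step is a \emph{substitution lemma}: if $\{(b, u_b) : b \in B\}$ is contained in the relator set of a presentation over $A \cup B$, then $(p, p[b \mapsto u_b])$ is a consequence of those relators for every $p \in \mathrm{FSQ}(A \cup B)$. I would prove this by induction on the word length of $p$, using (R1) and (R2) to push the relator $(b, u_b)$ into the conjugating positions of the Fenn--Rourke notation and (S) to handle occurrences of $\overline{b}$, i.e.\ of $b^{-1}$. This lemma is the only place where the good-involution axioms and the move (S) are genuinely used; everything else is bookkeeping as in the group and rack cases.

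Granting the lemma, I would show that both given presentations are connected to one common presentation by moves of types (T1) and (T3) only. From $\langle A \mid R \rangle_{\mathrm{sq}}$: apply (T3) $|B|$ times to adjoin each $b \in B$ with defining relator $(b, u_b)$; then apply (T1) to adjoin $S$, which is legitimate because for each $s = (p,q) \in S$ the pair $(p[b \mapsto u_b], q[b \mapsto u_b])$ lies in $\langle\langle R \rangle\rangle_{\mathrm{sq}}$ — this is precisely the assertion that $\phi^{-1}$ is a well-defined symmetric quandle homomorphism — so, combined with the substitution lemma, $s$ is a consequence of $R \cup \{(b, u_b)\}_{b}$; then apply (T1) once more to adjoin each relator $(a, v_a)$, legitimate because $v_a[b \mapsto u_b]$ represents $\phi^{-1}(\phi(a)) = a$, whence $(a, v_a[b \mapsto u_b]) \in \langle\langle R \rangle\rangle_{\mathrm{sq}}$ and the substitution lemma finishes it. This reaches the presentation $\mathcal{Q} = \langle A \cup B \mid R,\ (b{=}u_b)_{b \in B},\ S,\ (a{=}v_a)_{a \in A} \rangle_{\mathrm{sq}}$. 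The construction is symmetric in the two sides, so the mirror argument (swapping $A \leftrightarrow B$, $R \leftrightarrow S$, $u \leftrightarrow v$) connects $\langle B \mid S \rangle_{\mathrm{sq}}$ to the very same $\mathcal{Q}$ by moves of types (T1) and (T3). Since every (T1)-move is undone by a (T2)-move and every (T3)-move by a (T4)-move, reversing the second chain and concatenating yields a finite sequence of Tietze transformations from $\langle A \mid R \rangle_{\mathrm{sq}}$ to $\langle B \mid S \rangle_{\mathrm{sq}}$, as required.

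The main obstacle is not conceptual but consists in making the two technical ingredients watertight inside the six-move definition of $\langle\langle \cdot \rangle\rangle_{\mathrm{sq}}$. First, the substitution lemma must be proved with care: an occurrence of $b$ deep inside a conjugating word has to be rewritten one step at a time via (R2), the interaction with $\rho$ (an occurrence of $\overline{b}$) must be reduced to the case of $b$ via (S), and one should check that a consequence over $\mathrm{FSQ}(A)$ is still a consequence over the larger $\mathrm{FSQ}(A \cup B)$, which is clear since each move is stable under the inclusion. Second, one must verify that $\phi$ being an isomorphism — not merely a surjection — really does supply the membership facts $(p[b \mapsto u_b], q[b \mapsto u_b]) \in \langle\langle R \rangle\rangle_{\mathrm{sq}}$ and $(a, v_a[b \mapsto u_b]) \in \langle\langle R \rangle\rangle_{\mathrm{sq}}$ at the level of the free symmetric quandle, rather than only after passing to the quotient $X$. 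Once these points are settled the argument above goes through verbatim.
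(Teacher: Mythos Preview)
The paper does not supply its own proof of this proposition: it is stated with a citation to \cite{Yasuda24} and used as a black box, so there is no in-paper argument to compare your proposal against.

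That said, your outline is the standard adaptation of the classical Tietze argument (as in \cite{Fenn-Rourke1992} for racks) to the symmetric setting, and it is essentially correct. The two technical points you flag are the right ones. For the substitution lemma, the induction you describe does work: an occurrence of $b$ (or $\overline{b}$) in the base position of a Fenn--Rourke word is handled directly by the relator $(b,u_b)$ together with (S), while an occurrence inside a conjugating word is handled by (R1)--(R2); transitivity via (E1)--(E3) then assembles the full substitution. For the second point, the membership $(a,\,v_a[b\mapsto u_b]) \in \langle\langle R\rangle\rangle_{\mathrm{sq}}$ holds simply because both sides lie in $\mathrm{FSQ}(A)$ and have the same image in $\langle A\mid R\rangle_{\mathrm{sq}}$, which is exactly what the congruence $\langle\langle R\rangle\rangle_{\mathrm{sq}}$ records; no extra hypothesis on $\phi$ beyond bijectivity is needed there. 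One small wording issue: in your (T3) step you should say that the new relator $(x,r)$ has $r \in \mathrm{FSQ}(A)$ rather than $r \in F(A)$, matching the paper's statement, but this is cosmetic.
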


A symmetric quandle presentation is called a \textit{$(2m,n)$-presentation with inverses} (\textit{associated with $b_1, \dots, b_n$}) if it is
\[
    \left\langle x_1, \ldots, x_{2m}~
    \begin{array}{|c}
        b_i \cdot x_{1} = b_i \cdot x_{2}, ~ x_{2j-1}= \overline{x_{2j}} \quad (i=1,\ldots,n, ~j = 1,\ldots,m)
    \end{array}
    \right\rangle_{\mathrm{sq}}.
\]
We say that an $(2m,n)$-presentation with inverses satisfies the \textit{weak $\partial$-condition} if there exist signs $\varepsilon_1, \dots, \varepsilon_n \in \{1, -1\}$ such that
\[
    \prod_{i=1}^n b_i^{-1} \sigma_1^{\varepsilon_i} b_i \in K_{2m}.
\]

\begin{proposition}[\cite{Yasuda24}]\label{Proposition: knot symmetric quandle of plat form}
    Let $S$ be an adequate braided surface of degree $2m$ with a braid system $(\beta_1, \dots, \beta_n)$ such that $\beta_i = b_i^{-1} \sigma_1^{\,\varepsilon_i} b_i$ ($\varepsilon_i \in \{ 1, -1\}$).
    Then the knot symmetric quandle of the plat closure $\widetilde{S}$ has a $(2m,n)$-presentation with inverses associated with $b_1, \dots, b_n$.
\end{proposition}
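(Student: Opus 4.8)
The plan is to transcribe the proof of Proposition~\ref{Proposition: knot group of the plat form} almost verbatim, replacing the knot group by the knot symmetric quandle everywhere and van Kampen's theorem by its analogue for knot symmetric quandles. Fix an adequate braided surface $S$ of degree $2m$ with boundary braid $\beta_S$, and decompose its plat closure as $\widetilde{S} = S \cup A_S$, where $L := S \cap A_S \subset \partial(D^2 \times B^2)$ is the plat-closure link, which is the closure of $\beta_S$; recall that $\beta_S = \prod_{i=1}^{n} b_i^{-1}\sigma_1^{\varepsilon_i} b_i$. I would fix once and for all the Hurwitz arc system in $D^2\times\{y_0\}$ with starting point set $X_{2m}$, which identifies $X(D^2, X_{2m})$ with $\mathrm{FSQ}(X_{2m})$ and supplies the common generators $x_1,\dots,x_{2m}$ (homotopy classes of oriented nooses) used in all three pieces of the decomposition.

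First I would record the three local symmetric-quandle presentations. By the symmetric-quandle analogue of Kamada's presentation \cite{Kamada1994-01} of the knot group of a braided surface (see \cite{Yasuda24}), $X(S)$ has the presentation $\langle x_1,\dots,x_{2m} \mid b_i\cdot x_1 = b_i\cdot x_2 \ (i=1,\dots,n)\rangle_{\mathrm{sq}}$. Since $L$ is the closure of the $2m$-braid $\beta_S$, its knot symmetric quandle $X(L)$ has the presentation $\langle x_1,\dots,x_{2m}\mid x_k = \beta_S\cdot x_k\ (k=1,\dots,2m)\rangle_{\mathrm{sq}}$. Finally, $A_S$ is the union of the $m$ wicket-annuli of the loop $f$ with $\beta_f = \beta_S$; applying \cite[Lemma~4.7]{Yasuda24} directly (its specialization to groups is what was used in the proof of Proposition~\ref{Proposition: knot group of the plat form}), $X(A_S)$ has the presentation $\langle x_1,\dots,x_{2m}\mid x_k = \beta_S\cdot x_k,\ x_{2j-1} = \overline{x_{2j}} \ (k=1,\dots,2m,\ j=1,\dots,m)\rangle_{\mathrm{sq}}$, the additional relations arising because the $j$-th wicket joins the meridian disks of the $(2j-1)$-st and $2j$-th strands with opposite orientations.

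Then I would glue. The inclusion-induced symmetric quandle homomorphisms $X(L) \to X(S)$ and $X(L)\to X(A_S)$ both send the noose $x_k$ to the noose $x_k$, so the van Kampen theorem for knot symmetric quandles presents $X(\widetilde{S})$ as the corresponding pushout: generators $x_1,\dots,x_{2m}$, with relation set the union of those of $X(S)$ and of $X(A_S)$. The relations $x_k = \beta_S\cdot x_k$ are consequences of the relations $b_i\cdot x_1 = b_i\cdot x_2$ (since $\beta_S = \prod_i b_i^{-1}\sigma_1^{\varepsilon_i}b_i$ acts trivially modulo them, exactly as in the group case), so they may be deleted by a Tietze transformation, leaving precisely the $(2m,n)$-presentation with inverses associated with $b_1,\dots,b_n$.

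The step I expect to be the main obstacle is the symmetric-quandle input: verifying the van Kampen theorem for knot symmetric quandles in the form needed (base point on $L$, and the gluing locus introducing no spurious identifications) together with the computation of $X(A_S)$ in the second paragraph. The delicate point in the latter is orientation bookkeeping: because $\widetilde{S}$ may be non-orientable, one must track the orientations of the meridian disks of $S$ and of $A_S$ across $L$ carefully enough to see that the wickets really do produce the good-involution relation $x_{2j-1} = \overline{x_{2j}}$ rather than $x_{2j-1} = x_{2j}$. Once the symmetric-quandle van Kampen theorem and Lemma~4.7 are in hand, the remainder is the routine transcription described above.
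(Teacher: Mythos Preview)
The paper does not actually give a proof of this proposition: it simply cites \cite{Yasuda24}. What the paper \emph{does} prove is the group-theoretic analogue, Proposition~\ref{Proposition: knot group of the plat form}, and your proposal is a faithful transcription of that proof to the symmetric-quandle setting --- the same decomposition $\widetilde{S} = S \cup_L A_S$, the same three local presentations, the same inclusion-induced maps $x_k \mapsto x_k$, and the same appeal to \cite[Lemma~4.7]{Yasuda24} (which in the paper's group proof is already invoked only as a specialization). This is exactly the approach one expects \cite{Yasuda24} to take, and your identification of the symmetric-quandle van Kampen statement and the orientation bookkeeping in $X(A_S)$ as the substantive inputs is accurate. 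One small remark: the redundancy of the relations $x_k = \beta_S\cdot x_k$ is left implicit in the paper's proof of Proposition~\ref{Proposition: knot group of the plat form}, so your making it explicit via Tietze moves is if anything an improvement.
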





\subsection{Proof of Theorem~\ref{Main Theorem2}} 
We prove Theorem~\ref{Main Theorem2} by rephrasing the proof of Theorem~\ref{Main theorem}.
Thus, we provide only an outline of the proof.

\noindent\textbf{(Proof of the only if part)}:
Let $(Q, \rho)$ be the knot symmetric quandle of a $(c, d)$-component genus $g$ surface-link $F$.
Then, there exists an adequate braided surface $S$ of degree $2m$ whose plat closure $\widetilde{S}$ is $F$.
By Proposition~\ref{Proposition: knot symmetric quandle of plat form}, $(Q, \rho)$ has a $(2m, n)$-presentation with inverses satisfying the weak $\partial$-condition.
Furthermore, $(Q, \rho)$ satisfies the condition (2) because $\chi(\widetilde{S}) = 2m -n = 2(c+d) - g$.

Let $F_0$ be a connected component of $F$.
For any $[(D, \alpha)], [(D', \alpha')] \in X(F)$ with meridional disks $D$ and $D'$ of $F_0$, either $[(D, \alpha)]$ or $[(-D, \alpha)]$ belong to the connected component of $X(F)$ containing $[(D', \alpha')]$.
Moreover, connected components of $X(F)$ containing $[(D, \alpha)]$ and $[(-D, \alpha)]$ are the same if and only if $F_0$ is non-orientable.
Thus, the number of connected component of $X(F)$ is $2c+d$.
Furthermore, the good involution of $X(F)$ maps $[(D, \alpha)]$ to $[(-D, \alpha)]$.
Hence, $(Q, \rho)$ satisfies the condition (3).

\noindent\textbf{(Proof of the if part)}:
Let $(Q, \rho)$ be a symmetric quandle satisfying the conditions (1) -- (3) of Theorem~\ref{Main Theorem2} for some $m,n \geq 0$.
Then, there exists an adequate braided surface $S$ of degree $2m$ such that $X(\widetilde{S})$ is isomorphic to $(Q, \rho)$.
By the condition (3), $\widetilde{S}$ is $(c, d)$-component.
Finally, the genus of $\widetilde{S}$ is $g$ since it holds that $\chi(\widetilde{S}) = 2m - n = 2(c+d) - g$.
\qed

\section{Knot symmetric quandles and dihedral quandles}\label{Section: knot symmetric quandles and dihedral quandles}

In this section, we provide symmetric quandle presentations of dihedral quandles with arbitrary good involutions, and we show that dihedral quandles with arbitrary good involutions can be realized as the knot symmetric quandles of surface-links.
The following was provided by Taniguchi and Kamada in a private communication:
Let $\langle A \mid R \rangle_{\mathrm{sq}}$ be a symmetric quandle presentation of $(Q, \rho)$.
Denote $\overline{A} = \{\overline{x} \mid x \in A\}$, and assume that $\overline{A}$ is disjoint from $A$.
For an element $r = (x^w, y^u) \in R$, we denote $\overline{r} = (\overline{x}^w, \overline{y}^u)$, where $\overline{\overline{x}} = x$ for $x \in A$.
We define $\overline{R} = \{ \overline{r} \mid r \in R \}$, and $R_0 = \{(x^{\overline{y}}, x^{y^{-1}}), (\overline{x}^{\overline{y}},~ \overline{x}^{y^{-1}}) \mid x, y \in A \}$.

\begin{proposition}\label{Proposition: Quandle presentation from symmetric quandle presentation}
    In the situation above, $Q$ has a quandle presentation $\langle A, \overline{A} \mid R, \overline{R}, R_0 \rangle_{\mathrm{q}}$.
\end{proposition}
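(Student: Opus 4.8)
The plan is to show that the quandle $\langle A,\overline{A}\mid R,\overline{R},R_0\rangle_{\mathrm q}$ and the quandle $Q$ underlying $(Q,\rho)=\langle A\mid R\rangle_{\mathrm{sq}}$ have the same underlying presentation once one unwinds the definition of $\mathrm{FSQ}(A)$. Recall from Example~\ref{Example: the double of quandle} that $\mathrm{FSQ}(A)$ has underlying set $(A\cup\overline A)\times\mathrm F(A)$, where $\overline a$ is identified with $a^{-1}\in\mathrm F(A)$; that is, $\mathrm{FSQ}(A)$ as a \emph{quandle} is exactly the free quandle $\mathrm{FQ}(A\cup\overline A)$ modulo the relations identifying the generator $\overline a$ with the element $a^{-1}\in\mathrm F(A)$ acting as a word in the Fenn--Rourke notation. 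First I would make this precise: there is a canonical quandle isomorphism between $\mathrm{FSQ}(A)$ (forgetting $\rho$) and the quotient of $\mathrm{FQ}(A\cup\overline A)$ by the relations $a^{\overline b}=a^{b^{-1}}$ and $\overline a^{\,\overline b}=\overline a^{\,b^{-1}}$ for all $a,b\in A$ — that is, by exactly $R_0$. This is the heart of the matter: it translates the ``doubling'' construction into an ordinary quandle presentation on twice as many generators.

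Next I would compare the two sets of symmetric-quandle moves (E1)--(E3), (R1), (R2), (S) defining $\langle\langle\,\cdot\,\rangle\rangle_{\mathrm{sq}}$ with the ordinary quandle-congruence moves on $\mathrm{FQ}(A\cup\overline A)$. The moves (E1)--(E3) are just ``equivalence relation,'' common to both. The moves (R1) and (R2), which allow acting on both sides of a relation by a generator $a\in A$ (resp.\ $a^{-1}$), together with the identification $\overline a\leftrightarrow a^{-1}$, become precisely the quandle-congruence moves allowing the action by every generator in $A\cup\overline A$ — no more, no less. The one move with no counterpart among ordinary quandle moves is (S): from $(x,y)$ deduce $(x^{-1},y^{-1})$, i.e.\ $(\rho x,\rho y)$. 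Its effect is exactly to force, for every relator $r=(x^w,y^u)\in R$, the companion relator $\overline r=(\overline x^{\,w},\overline y^{\,u})\in\overline R$; and conversely, once $\overline R$ is adjoined explicitly, move (S) is redundant because $\rho$ is realized on generators by $a\mapsto\overline a$ and on the rest by $w\mapsto$ the reversed-orientation word, which is already generated by the ordinary moves acting on $A\cup\overline A$. So I would prove the two implications:
\begin{itemize}
\item[(i)] every symmetric-quandle consequence of $R$ in $\mathrm{FSQ}(A)$ is a quandle consequence of $R\cup\overline R\cup R_0$ in $\mathrm{FQ}(A\cup\overline A)$;
\item[(ii)] conversely, every quandle consequence of $R\cup\overline R\cup R_0$ is a symmetric-quandle consequence of $R$.
\end{itemize}
For (i) one checks move-by-move, the only nontrivial case being (S), handled by the observation above. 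For (ii), $\overline R$ consists of images under $\rho$ of elements of $R$, hence of consequences by (S); $R_0$ holds identically in $\mathrm{FSQ}(A)$ since $\overline b$ \emph{is} $b^{-1}$ there; and the ordinary quandle moves on $A\cup\overline A$ are instances of (R1)/(R2) combined with the $R_0$-identifications.

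The conclusion then follows: the quandle $\langle A,\overline A\mid R,\overline R,R_0\rangle_{\mathrm q}$ equals $\mathrm{FQ}(A\cup\overline A)$ modulo the smallest quandle congruence containing $R\cup\overline R\cup R_0$, which by (i) and (ii) is the same quotient set as $\mathrm{FSQ}(A)/\langle\langle R\rangle\rangle_{\mathrm{sq}}=Q$ (as a quandle, forgetting $\rho$); and the quandle operations match because both are induced from the free quandle operation on $A\cup\overline A$. I expect the main obstacle to be purely bookkeeping: carefully verifying that move (S) on $\mathrm{FSQ}(A)$ corresponds exactly to adjoining $\overline R$ and nothing stronger, and that the ``$\overline a=a^{-1}$'' identification does not secretly import extra relations beyond $R_0$ — in other words, pinning down the precise quandle-theoretic content of the free symmetric quandle. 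Once that correspondence is set up cleanly, the rest is a routine check of the six moves against the quandle axioms (Q1)--(Q3).
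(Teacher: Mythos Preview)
Your approach is correct but different from the paper's. The paper does not compare the two congruences move-by-move; instead it argues categorically. It sets $Q_0=\langle A,\overline A\mid R,\overline R,R_0\rangle_{\mathrm q}$, defines an involution $\rho_0$ on $Q_0$ by $\rho_0(x^w)=\overline{x}^{\,w}$, and checks directly (using the relations in $R_0$) that $\rho_0$ is a good involution. Once $(Q_0,\rho_0)$ is a symmetric quandle, the universal property of quandle presentations produces a quandle map $F_1\colon Q_0\to Q$ extending $A\cup\overline A\hookrightarrow Q$, and the universal property of symmetric quandle presentations produces a symmetric-quandle map $F_2\colon (Q,\rho)\to(Q_0,\rho_0)$ extending $A\hookrightarrow Q_0$; these are then inverse to each other on generators, hence everywhere.

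Your route---identifying $\mathrm{FSQ}(A)$ as a quandle with $\mathrm{FQ}(A\cup\overline A)/R_0$ and then matching the closure operators $\langle\langle\cdot\rangle\rangle_{\mathrm{sq}}$ and $\langle\langle\cdot\rangle\rangle_{\mathrm q}$ move-by-move---is more elementary and self-contained: it avoids invoking the universality lemmas cited from \cite{Kamada2017_book} and \cite{Yasuda24}, and it makes explicit exactly why $\overline R$ and $R_0$ are the right relations to add. The cost is the bookkeeping you anticipate, in particular checking that move~(S) propagates correctly through consequences (i.e.\ that the quandle congruence generated by $R\cup\overline R\cup R_0$ is stable under $x\mapsto\overline x$) and that the $\sim_q$ relations on $\mathrm{FQ}(A\cup\overline A)$ for the barred generators agree with those inherited from the double construction. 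The paper's approach hides this bookkeeping inside the single verification that $\rho_0$ is a good involution, which is why it is shorter; your approach, once carried out, would give a slightly more transparent picture of what the symmetric-quandle presentation is actually doing at the level of relations.
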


\begin{proof}
    Let $Q_0$ denote the quandle with $\langle A, \overline{A} \mid R, \overline{R}, R_0 \rangle_{\mathrm{q}}$.
    We define the involution $\rho_0: Q_0 \to Q_0$ by $\rho_0(x^w) = \overline{x}^w$ for $x^w \in Q_0$.
    Then we verify that $\rho_0$ is a good involution of $Q_0$ as follows:
    For any $x^w, y^u \in Q$ ($x, y \in A \cup \overline{A}$, $w, u \in F(A)$), we have
    \begin{align*}
        \rho_0(x^w * y^u) ~&=~ \overline{x}~^{wu^{-1} y u} ~=~ \rho_0(x^w) * y^u.\\
        x^w * \rho_0(y^u) ~&=~ x^{w u^{-1} \overline{y} u} ~\stackrel{(\sharp)}{=}~ x^{\overline{y}y w u^{-1} y^{-1} u} ~=~ x^{y^{-1}y w u^{-1} y^{-1} u} ~=~ x^w ~\overline{*}~ y^u.
    \end{align*}
    Here, the equality $(\sharp)$ follows from $z^{w\overline{y}} = z^{wx^{-1}\overline{y}yxy^{-1}}$ for each $x, y, z \in A$ and $w \in F(A)$, derived from the relation $x^{\overline{y}} = x^{y^{-1}}$ in $R_0$.

    Let $f_1: A \cup \overline{A} \hookrightarrow Q$ and $\iota_1: A \cup \overline{A} \hookrightarrow Q_0$ be the natural maps.
    By the universality of quandle presentations (cf. \cite[Lemma~8.6.3]{Kamada2017_book}), there exists a quandle homomorphism $F_1: Q_0 \to Q$ such that $f_1 = F_1 \circ \iota_1$.

    Similaly, let $f_2: A \hookrightarrow Q_0$ and $\iota_2: A \hookrightarrow Q$ be the natural maps.
    By the universality of symmetric quandle presentation (\cite[Lemma~3.8]{Yasuda24}), there exists a symmetric quandle homomorphism $F_2: (Q, \rho) \to (Q_0, \rho_0)$ such that $f_2 = F_2 \circ \iota_2$.
    Finally, $F_1$ and $F_2$ are inverse maps of each other, and thus $Q$ and $Q_0$ are isomorphic.
\end{proof}

\begin{lemma}[cf. \cite{Dhanwani-Raundal-Singh2023}]\label{Lemma: Presentations of dihedral quandles}
    The dihedral quandle has the following quandle presentation:
    \begin{align*}
        R_{2n+1} ~&=~ \left\langle x, y~
        \begin{array}{|c}
            x^{y^2} = x, ~ y^{x^2} = y, ~ x = y^{(x y)^{n}}
        \end{array}
        \right\rangle_{\mathrm{q}}.\\
        R_{2n} ~&=~ \left\langle x, y~
        \begin{array}{|c}
            x^{y^2} = x, ~ y^{x^2} = y, ~ x = x^{(y x)^{n}}, ~ y = y^{(x y)^{n}}
        \end{array}
        \right\rangle_{\mathrm{q}}.
    \end{align*}
\end{lemma}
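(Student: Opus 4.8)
The plan is to verify both presentations directly by constructing mutually inverse quandle homomorphisms between the abstractly presented quandle $P$ (either $\langle x, y \mid \cdots \rangle_{\mathrm{q}}$) and the dihedral quandle $R_N$ with $N \in \{2n+1, 2n\}$. First I would fix the target: send $x \mapsto 0$ and $y \mapsto 1$ in $R_N = \Z/N\Z$ with $a * b = 2b - a$. To see this extends to a quandle homomorphism $\varphi\colon P \to R_N$, it suffices to check that each defining relator maps to an identity in $R_N$. A short computation records the key facts in $R_N$: writing $0$ and $1$ for the images of $x,y$, one has $0^{1^k} = 0 * 1 * \cdots * 1$ ($k$ times) which alternates, and more usefully $a^{(x y)^k}$ translates into a translation by $2k$ (since conjugating by the pair $x,y$ — i.e. applying $S_1 S_0$ — is the map $a \mapsto a + 2$ on $R_N$, as $S_0 S_1$ and products thereof are affine with trivial linear part only after squaring). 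Concretely $S_1 S_0(a) = 2\cdot 0 - (2 \cdot 1 - a) = a - 2$, so $(xy)^n$ shifts by $\mp 2n \equiv \mp(N-1)$ or $\equiv 0$ depending on parity of $N$; this is exactly what makes the relators $x = y^{(xy)^n}$ (odd case) and $x = x^{(yx)^n}$, $y = y^{(xy)^n}$ (even case) hold, while $x^{y^2} = x$, $y^{x^2}=y$ hold because $S_b^2 = \id$ on any kei and $R_N$ is a kei. So $\varphi$ is well-defined.

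Next I would construct the inverse. The hard part — and the main obstacle — is proving surjectivity of $\varphi$ together with the upper bound $|P| \le N$, i.e. that $P$ has at most $N$ elements; given these, $\varphi$ is a bijection and hence an isomorphism. For surjectivity, I note $\mathrm{Inn}(P)$ is generated by $S_x, S_y$, and the elements $x^w$ for $w$ ranging over words in $x,y$ exhaust $P$ up to the observation that $P$ is generated by $x,y$ as a quandle; applying $S_x, S_y$ repeatedly to $x$ and $y$ and using $a*b = 2b - a$ on the image side shows every residue in $\Z/N\Z$ is hit. For the upper bound $|P| \le N$, I would show that the relations force every element of $P$ to be expressible as $x^{(yx)^k} = x * \underbrace{y * x * \cdots}_{2k}$ or $y^{(xy)^k}$ for some $k$, and then that the relators collapse the list of such elements to a cyclically-indexed family of size $N$. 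Specifically, the relations $x^{y^2} = x$ and $y^{x^2} = y$ let one reduce any alternating word in $x,y$ (read as an inner automorphism applied to $x$ or $y$) modulo backtracking, so that $P = \{ x, x^y, x^{yx}, x^{yxy}, \ldots \}$, a single orbit; the periodicity relator ($x = y^{(xy)^n}$ etc.) then imposes period dividing $N$ on this list. This combinatorial reduction is the step requiring the most care, and I would model it on the argument in \cite{Dhanwani-Raundal-Singh2023}, adapting their count to the even/odd dichotomy.

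Finally, with $\varphi\colon P \to R_N$ surjective and $|P| \le N = |R_N|$, the map $\varphi$ is a bijection, hence an isomorphism of quandles, completing the proof. I would write the odd case $R_{2n+1}$ first in full and then indicate the modifications for $R_{2n}$: in the even case the single periodicity relator is insufficient because the two generators lie in (a priori) distinct cosets of the orbit structure, which is why two periodicity relators $x = x^{(yx)^n}$ and $y = y^{(xy)^n}$ are needed, and I would point out that one of these is in fact redundant given the other together with $x^{y^2}=x$, $y^{x^2}=y$, so that the count still comes out to exactly $2n$.
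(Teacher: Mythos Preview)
The paper does not give its own proof of this lemma; it is stated with a citation to \cite{Dhanwani-Raundal-Singh2023} and then used as a black box. Your proposed argument---define $\varphi\colon P\to R_N$ by $x\mapsto 0$, $y\mapsto 1$, verify the relators, show surjectivity, and bound $|P|\le N$ by reducing every element to a canonical alternating form---is the standard route and is essentially what one finds in the reference, so you are in line with the paper's treatment.

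Two small corrections. First, your computation $S_1 S_0(a)=a-2$ has a sign slip: with $S_b(a)=2b-a$ one gets $S_1 S_0(a)=2-(-a)=a+2$ and $S_0 S_1(a)=a-2$; this does not affect the verification, since either way $(S_yS_x)^n$ and $(S_xS_y)^n$ act as $\pm 2n$-shifts and the relators hold in $R_N$. Second, your aside that in the even case one of the two periodicity relators is redundant given the kei relations is doubtful: from $x^{y^2}=x$ and $y^{x^2}=y$ one does get $S_x^2=S_y^2=\id$ on all of $P$, but knowing that $(S_xS_y)^n$ fixes $x$ does not obviously force it to fix $y$, since $x$ and $y$ lie in (a priori) different $\mathrm{Inn}(P)$-orbits and fixing the generator of one orbit places no direct constraint on the other. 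Fortunately your cardinality bound does not need this redundancy---the presentation already contains \emph{both} relators, so you may use $x=x^{(yx)^n}$ to cap the $x$-orbit at $n$ elements and $y=y^{(xy)^n}$ to cap the $y$-orbit at $n$ elements, giving $|P|\le 2n$ directly. I would simply drop the redundancy remark.
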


Now, we provide symmetric quandle presentations of dihedral quandles for each good involution.

\begin{proposition}\label{Proposition: Presentations of dihedral quandles with anti}
    For $n \geq 1$, the dihedral quandle with the antipodal map has the following presentation:
    \begin{align*}
        (R_{4n+2}, \rho_A) ~&=~ \left\langle x, y~
        \begin{array}{|c}
            \overline{x} = y^{(x y)^{n}}, ~\overline{y} = x^{(yx)^{n}}
        \end{array}
        \right\rangle_{\mathrm{sq}},\\
        (R_{4n}, \rho_A) ~&=~ \left\langle x, y~
        \begin{array}{|c}
            \overline{x} = x^{(y x)^{n}}, ~ \overline{y} = y^{(x y)^{n}}
        \end{array}
        \right\rangle_{\mathrm{sq}}.\\
    \end{align*}
\end{proposition}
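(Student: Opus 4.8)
The plan is to identify the symmetric quandle $P$ displayed on the right-hand side of each equality with $(R_{4n+2},\rho_A)$, respectively $(R_{4n},\rho_A)$, by first pinning down its underlying quandle and then its good involution. Applying Proposition~\ref{Proposition: Quandle presentation from symmetric quandle presentation} to $P$ gives its underlying quandle a presentation on generators $x,y,\overline{x},\overline{y}$ with relators $R\cup\overline{R}\cup R_0$, where $R=\{\,\overline{x}=y^{(xy)^{n}},\ \overline{y}=x^{(yx)^{n}}\,\}$ in the $(4n+2)$ case and $R=\{\,\overline{x}=x^{(yx)^{n}},\ \overline{y}=y^{(xy)^{n}}\,\}$ in the $4n$ case. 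As the relators in $R$ express $\overline{x},\overline{y}$ as words in $x,y$, the Tietze move (T4) deletes $\overline{x},\overline{y}$ after substitution, leaving a quandle presentation $\mathcal{P}$ on $\{x,y\}$ whose relators are the substituted images of $\overline{R}$ and $R_0$. Using $\rho(a^{w})=\rho(a)^{w}$, valid for any good involution, applying $\rho$ to the relators in $R$ and substituting at once gives $x=x^{(yx)^{n}(xy)^{n}}$, $y=y^{(xy)^{n}(yx)^{n}}$ in the $(4n+2)$ case and $x=x^{(yx)^{2n}}$, $y=y^{(xy)^{2n}}$ in the $4n$ case.

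I would then compare $\mathcal{P}$ with the presentation $\mathcal{Q}$ of $R_{4n+2}$, respectively $R_{4n}$, given by Lemma~\ref{Lemma: Presentations of dihedral quandles}, which has the same generators $\{x,y\}$. Each relator of $\mathcal{P}$ is checked to hold in $\Z/(4n+2)$, respectively $\Z/4n$, under $x\mapsto 0$, $y\mapsto 1$ -- a finite list of congruences -- and since $\mathcal{Q}$ presents $R_{4n+2}$ this makes every relator of $\mathcal{P}$ a consequence of $\mathcal{Q}$. It remains to prove the reverse: that the relators $x^{y^{2}}=x$, $y^{x^{2}}=y$, $x^{(yx)^{2n+1}}=x$, $y^{(xy)^{2n+1}}=y$ of $\mathcal{Q}$, respectively $x^{y^{2}}=x$, $y^{x^{2}}=y$, $x^{(yx)^{2n}}=x$, $y^{(xy)^{2n}}=y$, are consequences of $\mathcal{P}$. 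Then $\mathcal{P}$ and $\mathcal{Q}$ are Tietze-equivalent, so the underlying quandle of $P$ is $R_{4n+2}$, respectively $R_{4n}$, via an isomorphism with $x\mapsto 0$, $y\mapsto 1$.

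The hard part is this reverse derivation, a bookkeeping computation in the Fenn--Rourke calculus. The substituted relators of $R_0$ are identities such as $x*\overline{x}=x$, $x*\overline{y}=x\,\overline{*}\,y$, $\overline{x}*\overline{y}=\overline{x}\,\overline{*}\,y$, $\overline{x}*x=\overline{x}$ with $\overline{x},\overline{y}$ replaced by their defining words; rewriting them with exponents in $F(\{x,y\})$ and simplifying via the trivial identities $a^{a}=a=a^{a^{-1}}$ and $a^{b^{-1}}=a\,\overline{*}\,b$ should collapse them onto the four dihedral relators. For instance, in the $4n$ case with $n=1$, the relator $x*\overline{x}=x$ becomes $x^{y^{-1}xy}=x$, the relator $y*\overline{y}=y$ becomes $y^{x^{-1}yx}=y$ (hence also $y^{x^{-1}y^{-1}x}=y$), and combining the latter with the relator $y*\overline{x}=y\,\overline{*}\,x$ gives $y^{x^{2}}=y$; the presentations being symmetric under $x\leftrightarrow y$, $x^{y^{2}}=x$ follows too, and in the $(4n+2)$ case one also reconciles $x^{(yx)^{n}(xy)^{n}}=x$ with $x^{(yx)^{2n+1}}=x$. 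The general $n$ repeats this manipulation with the alternating words $(xy)^{n}$, $(yx)^{n}$; I expect that to be the only step needing real care.

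It then remains to identify the good involution. Let $\rho$ be the good involution of $P$ induced by the presentation. Since $\rho(a^{w})=\rho(a)^{w}$ for all $w\in F(\{x,y\})$, $\rho$ is determined by its values on the generators, and under the isomorphism above the relators in $R$ give $\rho(0)=1^{(01)^{n}}=2n+1=\rho_A(0)$ and $\rho(1)=0^{(10)^{n}}=2n+2=\rho_A(1)$ in the $(4n+2)$ case, and $\rho(0)=0^{(10)^{n}}=2n=\rho_A(0)$ and $\rho(1)=1^{(01)^{n}}=2n+1=\rho_A(1)$ in the $4n$ case. As $\rho_A$ is a good involution and $\{0,1\}$ generates $R_{4n+2}$, it follows that $\rho=\rho_A$, so $P\cong(R_{4n+2},\rho_A)$, respectively $(R_{4n},\rho_A)$, as symmetric quandles. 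One could also conclude this from the Kamada--Oshiro classification recalled in Example~\ref{Example: dihedral quandle}, ruling out the other candidates by the same computations.
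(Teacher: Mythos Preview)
Your proposal is correct and follows essentially the same route as the paper: apply Proposition~\ref{Proposition: Quandle presentation from symmetric quandle presentation} to pass to a quandle presentation, eliminate $\overline{x},\overline{y}$ via Tietze moves, and match the result against Lemma~\ref{Lemma: Presentations of dihedral quandles}; then identify the good involution.

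The one tactical difference worth noting is where the key relations $x^{y^{2}}=x$ and $y^{x^{2}}=y$ come from. You plan to extract them from the substituted $R_0$ relators, which works but is where all the bookkeeping lives. The paper instead derives them \emph{before} invoking Proposition~\ref{Proposition: Quandle presentation from symmetric quandle presentation}, by exhibiting a free-quandle identity
\[
x*y \;=\; x*\bigl(y^{(xy)^{n}}\,x^{(yx)^{n}y}\bigr)
\]
(valid without any relations), then recognising the two conjugates on the right as $\overline{x}$ and $\overline{y}$ via the defining relators together with $\overline{y}*y=\overline{y}$; the good-involution axiom $a*\overline{b}=a\,\overline{*}\,b$ then gives $x*y=x*\overline{x}\overline{y}=x\,\overline{*}\,y$, i.e.\ $x^{y^{2}}=x$. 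Adding these two relations to the symmetric-quandle presentation first makes the subsequent sixteen-relator reduction almost mechanical, since one can then freely replace $x\leftrightarrow x^{-1}$ and $y\leftrightarrow y^{-1}$ in every exponent. For the involution, the paper simply observes $\rho(x)\neq x$, $\rho(y)\neq y$ and invokes the Kamada--Oshiro classification, which is the alternative you mention at the end; your direct computation of $\rho(0),\rho(1)$ is equally valid.
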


\begin{proof} 
    We show the case of $(R_{4n+2}, \rho_A)$.
    Let $(Q, \rho) = \left\langle x, y~
    \begin{array}{|c}
        \overline{x} = y^{(x y)^{n}}, ~\overline{y} = x^{(yx)^{n}}
    \end{array}
    \right\rangle_{\mathrm{sq}}$ denote the symmetric quandle.
    By direct calculation, we obtain the equality $x*y = x*(y^{(xy)^{n}} x^{(yx)^{n}y})$ without any additional relations:
    \begin{align*}
        x*(y^{(xy)^n} x^{(yx)^{n}y}) ~&=~ x*((y^{-1}x^{-1})^n ~y (xy)^n ~y^{-1} (x^{-1} y^{-1})^{n} x (yx)^{n} y)\\
        ~&=~ x*((y^{-1}x^{-1})^n ~(yx)^n y ~y^{-1} (x^{-1} y^{-1})^{n} (xy)^{n+1})\\
        ~&=~ x*(xy) ~=~ x*y.
    \end{align*}
    Since the relation $\overline{y} = x^{(yx)^{n}}$ derives the equality $\overline{y} = x^{(yx)^{n}y}$, we obtain a new relation $x*y = x* \overline{y}$ in the presentation.
    Similarly, we obtain $y*x = y*\overline{x}$.
    Hence, $(Q, \rho)$ has the following presentation
    \begin{align}\label{align: prese}
        (Q, \rho) ~&=~ \left\langle x, y~
        \begin{array}{|c}
            x = x^{y^2}, ~ y = y^{x^2},~
            \overline{x} = y^{(x y)^{n}}, ~\overline{y} = x^{(yx)^{n}}
        \end{array}
        \right\rangle_{\mathrm{sq}}.
    \end{align}
    By Proposition~\ref{Proposition: Quandle presentation from symmetric quandle presentation}, $Q$ has a quandle presentation with four generators $x, y, \overline{x}$, $\overline{y}$, and the following 16 relations:
    \begin{gather*}
        x = x^{y^2}, \quad y = y^{x^2}, \quad \overline{x} = y^{(x y)^{n}}, \quad \overline{y} = x^{(yx)^{n}}, \qquad
        \overline{x} = \overline{x}^{y^2}, \quad \overline{y} = \overline{y}^{x^2}, \quad x = \overline{y}^{(x y)^{n}}, \quad y = \overline{x}^{(yx)^{n}},\\
        x^{\overline{x}} = x^{x^{-1}}, \quad x^{\overline{y}} = x^{y^{-1}}, \quad y^{\overline{x}} = y^{x^{-1}}, \quad y^{\overline{y}} = y^{y^{-1}}, \qquad
        \overline{x}^{\overline{x}} = \overline{x}^{x^{-1}}, \quad \overline{x}^{\overline{y}} = \overline{x}^{y^{-1}}, \quad \overline{y}^{\overline{x}} = \overleftarrow{y}^{x^{-1}}, \quad \overline{y}^{\overline{y}} = \overline{y}^{y^{-1}},
    \end{gather*}
    where the first four relations are came from the presentation of $(Q,\rho)$, the next four relations are came from $\overline{R}$ of Proposition~\ref{Proposition: Quandle presentation from symmetric quandle presentation}, and the relations in the second row are came from $R_0$ of Proposition~\ref{Proposition: Quandle presentation from symmetric quandle presentation}.

    We now remove generators $\overline{x}, \overline{y}$ with the relations $\overline{x} = y^{(x y)^{n}}$ and $\overline{y} = x^{(y x)^{n}}$ from the presentation of $Q$, resulting in the quandle $Q$ being generated by $x, y$ with the following 14 relations:
    \begin{gather*}
        x = x^{y^2}, \quad y = y^{x^2},\qquad
        y^{(x y)^{n} y^2} = y^{(x y)^{n}}, \quad x^{(x y)^{n} x^2} = x^{(x y)^{n}}, \quad x = x^{(yx)^{n} (x y)^{n}}, \quad y = y^{(x y)^{n} (yx)^{n}},\\
        x^{y^{(x y)^{n}}} = x^{x^{-1}}, \quad x^{x^{(y x)^{n}}} = x^{y^{-1}}, \quad y^{y^{(x y)^{n}}} = y^{x^{-1}}, \quad y^{x^{(y x)^{n}}} = y^{y^{-1}},\\
        (y^{(x y)^{n}})^{y^{(x y)^{n}}} = (y^{(x y)^{n}})^{x^{-1}}, \quad (y^{(x y)^{n}})^{x^{(y x)^{n}}} = (y^{(x y)^{n}})^{y^{-1}}, \quad (x^{(y x)^{n}})^{y^{(x y)^{n}}} = (x^{(y x)^{n}})^{x^{-1}}, \quad (x^{(y x)^{n}})^{x^{(y x)^{n}}} = (x^{(y x)^{n}})^{y^{-1}}.
    \end{gather*}
    Two relations $x^{y^2} = x$ and $y^{x^2} = y$ induce the following equalities for any $z \in \{x, y\}$ and $w, u \in F(\{x, y\})$:
    \begin{align*}
        z^{wxu} = z^{wx^{-1}u}, \quad z^{wyu} = z^{wy^{-1}u}.
    \end{align*}
    Then, the above 14 relations are simplified as follows:
    \begin{gather*}
        x = x^{y^2}, \quad y = y^{x^2},\qquad
        y^{(x y)^{n}} = y^{(x y)^{n}}, \quad x^{(x y)^{n}} = x^{(x y)^{n}}, \quad x = x, \quad y = y,\\
        x^{(yx)^{2n+1}} = x, \quad x^{(yx)^{2n+1}} = x, \quad y^{(xy)^{2n+1}} = y, \quad y^{(xy)^{2n+1}} = y^,\\
        y^{(xy)^{2n+1}} = y, \quad y^{(xy)^{2n+1}} = y, \quad x^{(yx)^{2n+1}} = y, \quad x^{(y x)^{2n+1}} = y.
    \end{gather*}

    Therefore, $Q$ is the dihedral quandle $R_{2n+1}$.
    Since the images of $x$ and $y$ under $\rho$ are distinct from themselves, $\rho$ is the antipodal map $\rho_A$.
    The case of $(R_{4n}, \rho_A)$ can be shown by a similar way, so we leave the proof to the reader.
\end{proof}

The proofs of the following two propositions are similar to the proof of Proposition~\ref{Proposition: Presentations of dihedral quandles with anti}, so we omit them.

\begin{proposition}\label{Proposition: Presentations of dihedral quandles with id}
    The dihedral quandle with the idetity map has the following presentations:
    \begin{align*}
        (R_{2n+1}, \id) ~&=~ \left\langle x, y~
        \begin{array}{|c}
            \overline{x} = x, ~ \overline{y} = y,~
            x = y^{(x y)^{n}}
        \end{array}
        \right\rangle_{\mathrm{sq}},\\
        (R_{2n}, \id) ~&=~ \left\langle x, y~
        \begin{array}{|c}
            \overline{x} = x, ~ \overline{y} = y,~
            x = x^{(y x)^{n}}, ~ y = y^{(x y)^{n}}
        \end{array}
        \right\rangle_{\mathrm{sq}}.\\
    \end{align*}
\end{proposition}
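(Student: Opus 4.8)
The plan is to run the argument of the proof of Proposition~\ref{Proposition: Presentations of dihedral quandles with anti} almost verbatim, with the antipodal relations replaced by the fixed-point relations $\overline{x} = x$ and $\overline{y} = y$; I describe the odd case $(R_{2n+1}, \id)$, the even case being entirely parallel. Put $(Q, \rho) = \langle x, y \mid \overline{x} = x,\ \overline{y} = y,\ x = y^{(xy)^n} \rangle_{\mathrm{sq}}$. First I would apply Proposition~\ref{Proposition: Quandle presentation from symmetric quandle presentation} to obtain a quandle presentation of the underlying quandle $Q$ on the four generators $x, y, \overline{x}, \overline{y}$, whose relations are the three relations of $(Q, \rho)$, the relations in $\overline{R}$ (of which only $\overline{x} = \overline{y}^{(xy)^n}$ is not already present, since $\overline{(\overline{x}=x)}$ and $\overline{(\overline{y}=y)}$ merely repeat themselves), and the eight relations $a^{\overline{b}} = a^{b^{-1}}$, $\overline{a}^{\overline{b}} = \overline{a}^{b^{-1}}$ for $a, b \in \{x, y\}$ coming from $R_0$.

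Next I would eliminate the generators $\overline{x}$ and $\overline{y}$ by means of the relations $\overline{x} = x$, $\overline{y} = y$ (Tietze transformations), leaving $Q$ generated by $x$ and $y$. Under the substitution $\overline{x} \mapsto x$, $\overline{y} \mapsto y$, each $R_0$-relation involving a repeated letter, such as $x^{\overline{x}} = x^{x^{-1}}$, becomes trivial because $z^z = z^{z^{-1}} = z$ follows from (Q1) and (Q2); the mixed relations $x^{\overline{y}} = x^{y^{-1}}$ and $y^{\overline{x}} = y^{x^{-1}}$, together with their barred copies, become, after applying the inner automorphism $S_y$ or $S_x$, exactly the kei relations $x^{y^2} = x$ and $y^{x^2} = y$; and the new $\overline{R}$-relation becomes $x = y^{(xy)^n}$, duplicating the relation coming from $R$. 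Hence $Q$ acquires the quandle presentation $\langle x, y \mid x^{y^2} = x,\ y^{x^2} = y,\ x = y^{(xy)^n} \rangle_{\mathrm{q}}$, which is $R_{2n+1}$ by Lemma~\ref{Lemma: Presentations of dihedral quandles}.

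Finally, since $\rho$ fixes both generators $x$ and $y$, the good-involution axioms $\rho(a * b) = \rho(a) * b$ and $\rho(a \,\overline{*}\, b) = \rho(a) \,\overline{*}\, b$ propagate this to all of $Q$, so $\rho = \id_{R_{2n+1}}$; this is consistent with Example~\ref{Example: dihedral quandle}, which says that $\id$ is a good involution precisely because $R_{2n+1}$ is a kei. For $(R_{2n}, \id)$ one runs the identical reduction starting from $\langle x, y \mid \overline{x} = x,\ \overline{y} = y,\ x = x^{(yx)^n},\ y = y^{(xy)^n} \rangle_{\mathrm{sq}}$ and lands on the second presentation of Lemma~\ref{Lemma: Presentations of dihedral quandles}. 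I expect the only delicate part to be exactly the bookkeeping that already appears in the proof of Proposition~\ref{Proposition: Presentations of dihedral quandles with anti}: listing the relations produced by Proposition~\ref{Proposition: Quandle presentation from symmetric quandle presentation}, performing the substitution $\overline{x} \mapsto x$, $\overline{y} \mapsto y$, and checking that every resulting relation is trivial, a duplicate, or one of the two kei relations.
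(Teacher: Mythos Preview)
Your proof is correct and follows exactly the approach the paper intends: the paper omits the proof and simply says it is ``similar to the proof of Proposition~\ref{Proposition: Presentations of dihedral quandles with anti}'', and your sketch carries out precisely that analogy, applying Proposition~\ref{Proposition: Quandle presentation from symmetric quandle presentation}, eliminating $\overline{x}, \overline{y}$ via the fixed-point relations, reducing the $R_0$-relations to the kei relations $x^{y^2}=x$, $y^{x^2}=y$, and invoking Lemma~\ref{Lemma: Presentations of dihedral quandles}. The only cosmetic difference is that in Proposition~\ref{Proposition: Presentations of dihedral quandles with anti} the kei relations are derived before applying Proposition~\ref{Proposition: Quandle presentation from symmetric quandle presentation}, whereas you obtain them afterwards from $R_0$; this is immaterial.
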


\begin{proposition}\label{Proposition: Presentations of dihedral quandles with half anti}
    The dihedral quandle with the half-antipodal map has the following presentation:
    \begin{align*}
        (R_{4n}, \rho_{HA}) ~=~ \left\langle x, y~
        \begin{array}{|c}
            \overline{x} = x, ~ \overline{x} = x^{(y x)^n}, ~ \overline{y} = y^{(x y)^n}
        \end{array}
        \right\rangle_{\mathrm{sq}}.
    \end{align*}
\end{proposition}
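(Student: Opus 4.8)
The plan is to imitate, step for step, the proof of Proposition~\ref{Proposition: Presentations of dihedral quandles with anti}. Write $(Q,\rho)$ for the symmetric quandle with the stated presentation.

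\textbf{Step 1 (enrich the presentation).} Combining $\overline{x}=x$ with $\overline{x}=x^{(yx)^n}$ already gives $x=x^{(yx)^n}$ in $(Q,\rho)$. As in the displayed Fenn--Rourke computation for $(R_{4n+2},\rho_A)$ — but now using the ``same-letter'' relations $\overline{x}=x^{(yx)^n}$, $\overline{y}=y^{(xy)^n}$ together with the good-involution identities $\rho(a)*a=\rho(a)$, $a*\rho(b)=a\overline{*}b$ and $\rho(a^w)=\rho(a)^w$ — I would extract $x*y=x\overline{*}y$ and, symmetrically, $y*x=y\overline{*}x$, which are equivalent to $x=x^{y^2}$ and $y=y^{x^2}$. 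Hence $(Q,\rho)=\langle x,y\mid x=x^{y^2},\ y=y^{x^2},\ \overline{x}=x,\ \overline{x}=x^{(yx)^n},\ \overline{y}=y^{(xy)^n}\rangle_{\mathrm{sq}}$.

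\textbf{Step 2 (pass to the underlying quandle and eliminate the barred generators).} Apply Proposition~\ref{Proposition: Quandle presentation from symmetric quandle presentation} to obtain a quandle presentation of $Q$ on the four generators $x,y,\overline{x},\overline{y}$ with the relations coming from $R$, $\overline{R}$ and $R_0$. Using $\overline{x}=x$ delete the generator $\overline{x}$ by a Tietze transformation, and using $\overline{y}=y^{(xy)^n}$ delete $\overline{y}$, reaching a two-generator quandle presentation of $Q$. Now, exactly as in Proposition~\ref{Proposition: Presentations of dihedral quandles with anti}, the relations $x=x^{y^2}$ and $y=y^{x^2}$ give $z^{wxu}=z^{wx^{-1}u}$ and $z^{wyu}=z^{wy^{-1}u}$ for all words $w,u$; running these through the (roughly sixteen) relations collapses all but a few to tautologies. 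The surviving content should be $x=x^{y^2}$, $y=y^{x^2}$, $x=x^{(yx)^{2n}}$, $y=y^{(xy)^{2n}}$: the presentation's relation $x=x^{(yx)^n}$ upgrades to $x=x^{(yx)^{2n}}$ on iterating, while applying $\rho$ to $\overline{y}=y^{(xy)^n}$ gives $y=\rho(y)^{(xy)^n}=y^{(xy)^{2n}}$, and the $\overline{R}$-relations produce nothing new after the substitutions. By Lemma~\ref{Lemma: Presentations of dihedral quandles} with $n$ replaced by $2n$, this presents $R_{4n}$, so $Q\cong R_{4n}$.

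\textbf{Step 3 (identify $\rho$).} Under this isomorphism $\rho$ fixes the generator $x$ (since $\overline{x}=x$) but acts nontrivially overall, since it sends $\overline{y}=y^{(xy)^n}\ne y$ in $R_{4n}$ (the two generators lie in different parity classes). So $\rho$ is neither the identity map (which fixes everything) nor the antipodal map (which moves everything), and by the Kamada--Oshiro classification recalled in Example~\ref{Example: dihedral quandle} the only remaining possibility is a half-antipodal map; since $(R_{4n},\rho_{HA})\cong(R_{4n},\rho'_{HA})$, we conclude $(Q,\rho)\cong(R_{4n},\rho_{HA})$.

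I expect the main obstacle to be the clerical part of Step 2: carrying on the order of sixteen relations through two successive generator eliminations without slipping, and checking that the surviving relations are \emph{exactly} those of Lemma~\ref{Lemma: Presentations of dihedral quandles} with $2n$ in place of $n$ — not more (which would collapse $Q$ onto a proper quotient of $R_{4n}$) and not fewer. The identity $z^{wxu}=z^{wx^{-1}u}$ and the precise shape of that lemma are what make the recognition work, and verifying that the two generators land in distinct parity classes of $R_{4n}$ (so that $y^{(xy)^n}\ne y$) is what pins $\rho$ to the half-antipodal type. The rest is a transcription of the proof of Proposition~\ref{Proposition: Presentations of dihedral quandles with anti}.
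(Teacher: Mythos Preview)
Your overall plan mirrors the paper, which omits the proof and says only that it is analogous to Proposition~\ref{Proposition: Presentations of dihedral quandles with anti}. So at the level of strategy you are doing exactly what the paper asks. But there is a genuine gap in Step~2, and it is not merely clerical.

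You write that ``the presentation's relation $x=x^{(yx)^n}$ upgrades to $x=x^{(yx)^{2n}}$ on iterating,'' and then list only $x=x^{(yx)^{2n}}$ among the surviving relations. Iteration produces the \emph{weaker} relation $x=x^{(yx)^{2n}}$ as a consequence, but it does not allow you to discard the stronger relation $x=x^{(yx)^n}$: that relation is explicitly present (it is $\overline{x}=x$ combined with $\overline{x}=x^{(yx)^n}$) and survives every Tietze move you perform. This is precisely the ``more'' you flagged in your closing paragraph as the danger that would collapse $Q$ onto a proper quotient of $R_{4n}$ --- and it does. In $R_{4n}$, for generators $x,y$ of opposite parity (which is necessary if they are to meet both connected components), one computes $x^{(yx)^n}=x+2n(x-y)\equiv x+2n\not\equiv x$, so $R_{4n}$ does \emph{not} satisfy $x=x^{(yx)^n}$. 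For $n=1$ the collapse is dramatic: $x=x^{yx}$ says $S_x(x*y)=x=S_x(x)$, and since $\overline{x}=x$ gives $S_x^2=\id$ (hence $S_x$ bijective with $S_x^{-1}(x)=x$), this forces $x*y=x$; thus $x$ becomes a global fixed point and the underlying quandle drops to three elements, not four. Your argument therefore does not identify $Q$ with $R_{4n}$, and the step you anticipated being routine bookkeeping is where the proof actually breaks. You should re-examine whether the stated presentation is correct as written, or whether some additional mechanism is needed to neutralise the extra relation $x=x^{(yx)^n}$.
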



\subsection{Proof of Thoerem~\ref{Theorem: dihedral quandle and knot symmetric quandle}}
For each $n \geq 1$ and each good involution $\rho$ of $R_n$, we construct a surface-link $F$ with the knot symmetric quandle $X(F)$ isomorphic to $(R_n, \rho)$.

\noindent (\textbf{Case 1}: $n \equiv 2~(\mathrm{mod}~4)$, $\rho = \rho_{A}$: the antipodal map)
We denote $k = n/2$ and take an integer $m$ such that $\gcd(k,m) = 1$.
Let $F = \tau^2(K(k, m))$ denote the 2-twist spin of a 2-bridge knot $K(k, m)$ (\cite{Zeeman1965}).
It was shown by Inoue \cite{Inoue2019} that the knot quandle $Q(F)$ is isomorphic to the dihedral quandle $R_{k}$.
Hence, the knot symmetric quandle $X(F)$ is the double of $R_{k}$, which is isomorphic to $(R_{2k}, \rho_A) = (R_{n}, \rho_A)$ (see Example~\ref{Example: the double of quandle}).

\noindent (\textbf{Case 2}: $n \equiv 0~(\mathrm{mod}~4)$, $\rho = \rho_{A}$)
We construct a surface-link $F_n$ as follows:
Let $b = (\sigma_2^{-1})^{k}$ be a 4-braid for $k = n/2$.
We put $b_1 = b$ and $b_2 = (\sigma_2\sigma_1\sigma_3\sigma_2)^{-1} \, b$.
Then, $(b_1^{-1}\sigma_1 b_1) (b_2^{-1} \sigma_1^{-1} b_2)$ is an adequate 4-braid.
Hence, we have a symmetric quandle $X$ with a $(4,2)$-presentation with inverses associated with $b_1$, $b_2$ satisfying the weak $\partial$-condition:
\begin{align*}
    \left\langle x_1, x_2, x_3, x_4~
    \begin{array}{|c}
        x_1 = x_2^{(x_3 ~ x_2)^k}, ~ x_3^{(x_2 x_3)^k} = x_4, ~
        x_1 = \overline{x_2}, ~ x_3 = \overline{x_4}
    \end{array}
    \right\rangle_{\mathrm{sq}}.
\end{align*}
We remove generetors $x_1$ and $x_4$ with relations $x_1 = \overline{x_2}$ and $x_3 = \overline{x_4}$ from the presentation so that we have
\begin{align*}
    \left\langle x_2, x_3~
    \begin{array}{|c}
        \overline{x_2} = x_2^{(x_3 x_2)^k}, ~ \overline{x_3} = x_3^{(x_2 x_3)^k}
    \end{array}
    \right\rangle_{\mathrm{sq}}.
\end{align*}
By Proposition~\ref{Proposition: Presentations of dihedral quandles with anti}, $X$ is the dihedral quandle $R_{2k} = R_n$ with the antipodal map.
By Theorem~\ref{Main Theorem2}, $X$ is the knot symmetric quandle of a surface-link $F_n$ consisting of two projective planes.

\noindent(\textbf{Case 3}: $n \equiv 1, 3~(\mathrm{mod}~4)$, $\rho = \id$)
Let $P_0$ be a standard projective plane in $\R^4$ (\cite{Kamada1989}).
We define $F$ as a connected sum of $\tau^2(K(k,m))$ and $P_0$.
In terms of the presentation of a symmetric quandle, taking the connected sum of $P_0$ is equivalent to adding relations $x = \overline{x}$, where $x$ represents the meridional disk of $P_0$.
Since $F$ is connected, $x = \overline{x}$ induces the relation $y = \overline{y}$ so that $X(F)$ has the presentation of $(R_{2n+1}, \id)$ in Proposition~\ref{Proposition: Presentations of dihedral quandles with id}.

\noindent(\textbf{Case 4}: $n \equiv 0, 2~(\mathrm{mod}~4)$, $\rho = \id$)
Let $F_n$ denote the surface-link constructed in (Case 2).
We obtain the surface-link $F$ by taking the connected sum of $P_0$ with each component of $F_n$, resulting in $F$ consisting of two Klein bottles.
Then, a presentation of $X(F)$ is obtained from the presentation of $(R_{4n}, \rho_A)$ in Proposition~\ref{Proposition: Presentations of dihedral quandles with anti} by adding two relations $x = \overline{x}$ and $y = \overline{y}$.
Hence $X(F)$ has the presentation of $(R_{2n}, \id)$ in Proposition~\ref{Proposition: Presentations of dihedral quandles with id}.

\noindent(\textbf{Case 5}: $n \equiv 0~(\mathrm{mod}~4)$, $\rho = \rho_{HA}$: the half-antipodal map)
Let $F$ be a connected sum of $F_n$ and a single $P_0$, resulting in $F$ consisting of a projective plane and a Klein bottle.
Then a presentation of $X(F)$ is obtained from the presentation of $(R_{4n}, \rho_A)$ in Proposition~\ref{Proposition: Presentations of dihedral quandles with anti} by adding the relation $x = \overline{x}$.
By Proposition~\ref{Proposition: Presentations of dihedral quandles with half anti}, $X(F)$ is isomorphic to $(R_{4n}, \rho_{HA})$.
\qed

As a consequence of Theorem~\ref{Theorem: dihedral quandle and knot symmetric quandle}, the knot symmetric quandle is a stronger invariant than the full knot quandle for distinguishing surface-links:

\begin{corollary}\label{Corollary: full knot quandle vs knot symmetric quandle}
    There exist surface-links that share the same full knot quandle but have distinct knot symmetric quandles.
\end{corollary}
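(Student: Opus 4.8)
The plan is to produce the required pair of surface-links directly from Theorem~\ref{Theorem: dihedral quandle and knot symmetric quandle}, exploiting the fact that a single quandle can carry inequivalent good involutions. First I would fix an integer $n$ with $n \equiv 2 \pmod{4}$ — the choice $n = 6$ already suffices — and recall from Example~\ref{Example: dihedral quandle} that the dihedral quandle $R_n$ then carries exactly two good involutions, the identity map $\id_{R_n}$ and the antipodal map $\rho_A$ with $\rho_A(a) = a + n/2$.

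Next I would apply Theorem~\ref{Theorem: dihedral quandle and knot symmetric quandle} twice, obtaining surface-links $F$ and $F'$ with $X(F) \cong (R_n, \id_{R_n})$ and $X(F') \cong (R_n, \rho_A)$. Since the full knot quandle $\widetilde{Q}(\cdot)$ is by definition the underlying quandle of the knot symmetric quandle $X(\cdot)$, both $F$ and $F'$ have full knot quandle isomorphic to $R_n$, so $\widetilde{Q}(F) \cong \widetilde{Q}(F')$. Concretely, writing $n = 2k$ with $k$ odd, one may take $F' = \tau^2(K(k,m))$ with $\gcd(k,m) = 1$ — whose full knot quandle is the double $D(R_k) \cong R_n$ by Inoue's computation together with Example~\ref{Example: the double of quandle} — while $F$ is necessarily non-orientable.

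It then remains to check that $X(F)$ and $X(F')$ are non-isomorphic as symmetric quandles, and this is the only step that needs an argument. I would use the observation that any isomorphism $(Q, \rho) \to (Q', \rho')$ of symmetric quandles restricts to a bijection between the fixed-point sets of $\rho$ and $\rho'$, since the defining relation $f \circ \rho = \rho' \circ f$ sends fixed points to fixed points. The fixed-point set of $\id_{R_n}$ is all of $R_n$, whereas that of $\rho_A$ is empty because $0 < n/2 < n$; hence $(R_n, \id_{R_n}) \not\cong (R_n, \rho_A)$, so $X(F) \not\cong X(F')$ and $F, F'$ are as required. I do not expect any genuine obstacle: the substantive content is entirely carried by Theorem~\ref{Theorem: dihedral quandle and knot symmetric quandle}, and only this final separation — most cleanly handled via the fixed-point-set invariant of a good involution — must be spelled out.
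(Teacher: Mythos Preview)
Your argument is correct and follows exactly the approach the paper intends: the corollary is stated immediately after Theorem~\ref{Theorem: dihedral quandle and knot symmetric quandle} as a direct consequence, with no separate proof given, and your proposal simply spells out the natural deduction---pick an $n$ for which $R_n$ carries two distinct good involutions, apply the theorem twice, and note that the underlying full knot quandles coincide while the symmetric quandles do not. The only detail you add beyond the paper's bare assertion is the fixed-point-set argument distinguishing $(R_n,\id)$ from $(R_n,\rho_A)$, which is a clean and appropriate way to make the final step explicit.
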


\section{Remarks on $P^2$-irreduciblity of non-orientable surface-links}\label{Section: Final remarks}

A \textit{$P^2$-knot} is a surface-knot homeomorphic to the projective plane $\RP^2$, and a \textit{$P^2$-link} is a disjoint union of $P^2$-knots.
A surface-link is called \textit{$P^2$-reducible} if it is a connected sum of a standard $P^2$-knot (\cite{Kamada1989}) and some surface-link, otherwise it is called \textit{$P^2$-irreducible}.
The following is one of the fundamental conjectures for $P^2$-knots.

\begin{conjecture}[Kinoshita's problem \cite{Kinoshita1961}]
    Every $P^2$-knot is $P^2$-reducible, that is, every $P^2$-knot is a connected sum of a standard $P^2$-knot and a surface-knot homeomorphic to the 2-sphere $S^2$.
\end{conjecture}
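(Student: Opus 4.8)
The plan is to attack Kinoshita's problem through the plat-closure framework of this paper, reformulating geometric $P^2$-reducibility as a statement about the combinatorics of braid systems. A $P^2$-knot $F$ is a $(0,1)$-component surface-link with $\chi(F) = 1$, so by Proposition~\ref{theorem: plat form presentation} it is the plat closure $\widetilde{S}$ of an adequate braided surface $S$ of degree $2m$ with a braid system $(\beta_1, \dots, \beta_n)$, $\beta_i = b_i^{-1}\sigma_1^{\varepsilon_i} b_i$, subject to $2m - n = 1$ and the weak $\partial$-condition $\prod_i b_i^{-1}\sigma_1^{\varepsilon_i} b_i \in K_{2m}$. The standard $P^2$-knot of \cite{Kamada1989} is realized by the minimal datum $m = n = 1$, $b_1 = 1_2$, $\beta_1 = \sigma_1$, whose plat closure has knot group $\Z/2$. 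Taking a connected sum with an $S^2$-knot should correspond, at the level of plat presentations, to juxtaposing the above with a degree-$2m'$ adequate braided surface satisfying $2m' - n' = 2$ and the $\partial$-condition (the orientable, $\chi = 2$ case). Thus the conjecture is equivalent to asserting that every adequate braided surface with $\chi = 1$ becomes, after the moves preserving $\widetilde{S}$ up to ambient isotopy, such a juxtaposition containing exactly one standard factor.

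First I would try to pin down the knot group. By Theorem~\ref{Main theorem} we already know $G(F)/[G(F),G(F)] \cong \Z/2$; the hoped-for intermediate result is the much stronger $G(F) \cong \Z/2$ for \emph{every} $P^2$-knot. If this holds, the meridian generates the whole group, placing one in an ``unknotting under minimal fundamental group'' situation analogous to Freedman's theorem for $2$-knots with infinite cyclic group. I would then attempt to promote this group-theoretic rigidity to a geometric splitting: using the $(2m,n)$-presentation with inverses, show that the braid system is slide-equivalent, after adjoining stabilizing generators $\sigma_{2j-1} \in K_{2m}$ (which realize trivial $S^2$-summands), to one in which a single factor $\beta_i = \sigma_1^{\pm 1}$ decouples. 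This would exhibit the standard $P^2$ summand and leave an adequate braided surface of Euler characteristic $2$, necessarily an $S^2$-knot by Theorem~\ref{Theorem: Characterization of knot grp in genuine plat form}, as the complementary summand.

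The main obstacle is that Kinoshita's problem is a genuinely open conjecture, and every step above is exactly where the difficulty concentrates. Establishing $G(F) \cong \Z/2$ is itself unknown: the characterization controls only the abelianization, and nothing in the weak $\partial$-condition forces the presentation to collapse to the group generated by one meridian; indeed a $P^2$-knot with larger group would \emph{refute} the conjecture. Even granting $G(F) \cong \Z/2$, the passage from fundamental-group triviality to an ambient connected-sum decomposition is precisely the sort of smooth unknotting statement for which no proof is known in this dimension, and the slide/Hurwitz moves together with stabilization are not known to form a calculus fine enough to detect connected-sum factors. I therefore do not expect a complete proof; the realistic deliverable is to isolate the conjecture as the assertion that the weak $\partial$-condition forces the above normal form, and to verify it for constrained families (small $n$, or the twist-spun models of \cite{Zeeman1965,Inoue2019}) as supporting evidence.
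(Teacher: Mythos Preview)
The statement you are attempting to prove is labeled a \emph{Conjecture} in the paper for good reason: the paper does not prove Kinoshita's problem and makes no claim to. It is quoted only as motivation for Proposition~\ref{Proposition: P2-reduciblity for surace-knots}, which extracts a \emph{necessary} condition for $P^2$-reducibility (the good involution of $X(F)$ is the identity), and for the construction of $P^2$-irreducible $P^2$-\emph{links} in Theorem~\ref{Theorem: P2-irreducible P2-links}. There is therefore no ``paper's own proof'' to compare against, and you are right to conclude that no complete argument is in reach.

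That said, your outline contains a substantive error, not merely an acknowledged difficulty. You propose as an intermediate target that $G(F)\cong\Z/2$ for every $P^2$-knot $F$, and assert that ``a $P^2$-knot with larger group would refute the conjecture.'' This is false. If $F = P_0 \,\#\, K$ with $P_0$ a standard $P^2$-knot and $K$ any $2$-knot, then $F$ is $P^2$-reducible by definition, while van Kampen gives
\[
G(F)\;\cong\; G(K)\big/\langle\!\langle m^{2}\rangle\!\rangle,
\]
where $m$ is a meridian of $K$. For a nontrivial $2$-knot $K$ this quotient is typically much larger than $\Z/2$; e.g.\ twist-spun $2$-bridge knots already give finite non-abelian examples. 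Thus Kinoshita's conjecture does \emph{not} predict $G(F)\cong\Z/2$, and your reduction to a ``minimal fundamental group'' unknotting problem is not a reformulation of the conjecture but a strictly stronger (and false) statement. Any plat/braid-system normal-form approach would have to allow an arbitrary $2$-knot summand alongside the single $\sigma_1^{\pm1}$ factor, which removes the rigidity your argument was hoping to exploit.
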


Using the knot symmetric quandle, we derive a necessary condition for the $P^2$-reduciblity of surface-knots.

\begin{proposition}\label{Proposition: P2-reduciblity for surace-knots}
    For a $P^2$-reducible surface-knot $F$, the good involution of $X(F)$ is the identity map.
    In particular, $X(F)$ is a kei.
\end{proposition}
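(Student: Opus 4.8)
The plan is to exploit two features of a $P^2$-reducible surface-knot: that one of its meridians becomes fixed by the good involution after the connected sum with a standard $P^2$-knot, and that its knot symmetric quandle is connected as a quandle; the good-involution axioms will then force the whole involution to be the identity. I would first set up the reduction. Since $F$ is a surface-knot it is connected, so if $F$ is a connected sum of a standard $P^2$-knot $P_0$ and a surface-link $L$, then $L$ must be connected; thus $F = F' \# P_0$ for a surface-knot $F'$, and $F$ is non-orientable because it carries a projective-plane summand. By the fact recalled in the excerpt, the knot symmetric quandle of a connected non-orientable surface-link is connected as a quandle, so $\widetilde{Q}(F)$ is connected, and its good involution $\rho_F$ sends $[(D,\alpha)]$ to $[(-D,\alpha)]$.

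Next I would record the effect of the connected sum on a symmetric quandle presentation, exactly as it is used in the proof of Theorem~\ref{Theorem: dihedral quandle and knot symmetric quandle} (Case~3): $X(P_0)$ is the one-element symmetric quandle $(R_1,\id)$, whose single element is fixed by its good involution, and by the van Kampen-type description of symmetric quandles of connected sums, $X(F)$ admits a symmetric quandle presentation obtained from one of $X(F')$ by adjoining the single relation $x_0 = \overline{x_0}$, where $x_0$ is the meridian of $F$ at the connect-sum $3$-sphere. In particular there is an element of $\widetilde{Q}(F)$, again denoted $x_0$, with $\rho_F(x_0) = x_0$.

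Then I would propagate this fixed point over all of $\widetilde{Q}(F)$. Because $\widetilde{Q}(F)$ is connected, every $a \in \widetilde{Q}(F)$ can be written as $a = x_0^{\,w}$ for some $w \in \mathrm{F}(\widetilde{Q}(F))$. The good-involution axioms give $\rho_F(p * q) = \rho_F(p) * q$, and, rewriting $p\,\overline{*}\,q = p * \rho_F(q)$, also $\rho_F(p\,\overline{*}\,q) = \rho_F(p)\,\overline{*}\,q$; hence an induction on the length of $w$ yields $\rho_F(x_0^{\,w}) = \rho_F(x_0)^{\,w} = x_0^{\,w}$. Therefore $\rho_F = \id$, and by the Kamada--Oshiro characterization recalled in Example~\ref{Example: dihedral quandle}, a quandle admits the identity map as a good involution precisely when it is a kei, so $\widetilde{Q}(F)$ is a kei.

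The only genuinely geometric step is the one recording that connect-summing the \emph{standard} $P^2$-knot contributes exactly the relation $x_0 = \overline{x_0}$ and no new generators (equivalently, that $X(P_0)\cong(R_1,\id)$); this is where $P^2$-reducibility, rather than mere non-orientability, is used, and where the connectedness of $F$ is essential — a disconnected $P^2$-reducible surface-link may have $\rho$ non-trivial on components carrying no $P_0$ summand. Everything after that is a formal manipulation with the good-involution axioms together with the connectedness of the knot symmetric quandle.
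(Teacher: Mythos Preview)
Your proposal is correct and follows essentially the same argument as the paper's proof: locate a meridian $x_0$ of the $P_0$ summand and observe $\rho_F(x_0)=x_0$, use connectedness of $\widetilde{Q}(F)$ (the paper cites Theorem~\ref{Main Theorem2} for this, you cite the remark that $X(K)$ is connected for connected non-orientable $K$), then propagate the fixed point via $\rho(a*b)=\rho(a)*b$ and conclude with Kamada--Oshiro. The only cosmetic difference is that the paper obtains $\rho_F(x_0)=x_0$ directly from the geometry of the meridional disk of $P_0$, whereas you route it through the connected-sum presentation; both are the same fact.
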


\begin{proof}
    Let $F$ be a $P^2$-reducible surface-knot, meaning that $F$ decomposes into a surface-knot and a trivial $P^2$-knot $P_0$.
    Let $x \in X(F)$ be an element representing a meridional disk of $P_0$.
    Then the image $\rho(x)$ of $x$ under the good involution $\rho$ of $X(F)$ is the same as $x$.
    Since $F$ is a surface-knot with a $(0,1)$-component, $X(F)$ is a connected quandle by Theorem~\ref{Main Theorem2}.
    Hence, for each $y \in X(F)$, there exists a word $w \in \mathrm{F}(X(F))$ such that $y = x*w$, where $\mathrm{F}(X(F))$ is the free group on $X(F)$.
    Therefore, we have $\rho(y) = \rho(x*w) = \rho(x)*w = x*w = y$, which implies that $\rho = \id$.
    It was shown by Kamada-Oshiro \cite{Kamada-Oshiro2010} that a quandle $Q$ is a kei if and only if the identity map is a good involution of $Q$.
\end{proof}

We remark that this Proposition does not hold for surface-links.
For instance, the surface-link constructed in (Case 5) of the proof of Theorem~\ref{Theorem: dihedral quandle and knot symmetric quandle} is $P^2$-reducible even though the good involution is the half-antipodal map.

Yoshikawa \cite{Yoshikawa1994} constructed two $P^2$-irreducible $P^2$-links, denoted by $8_1^{-1,-1}$ and $10_1^{-1,-1}$ in the list of \cite{Yoshikawa1994}.
In the proof of Theorem~\ref{Theorem: dihedral quandle and knot symmetric quandle}, we construct an infinite family of 2-component $P^2$-links $F_n$ $(n \geq 1)$.
We notice that $F_1$ and $F_2$ are $8_1^{-1,-1}$ and $10_1^{-1,-1}$, respectively.
It follows from a direct computation that the knot group of $F_n$ is the generalized quaternion group $Q_{n}$ of order $8n$ whose group presentation is
\begin{align*}
    \left\langle a, b~
    \begin{array}{|c}
        a^{4k} = 1, b^2 = a^k, b^{-1}ab = a^{-1}
    \end{array}
    \right\rangle,
\end{align*}
where meridional loops of the components of $F_n$ are represented by $b$ and $b^{-1}a$, respectively.
The orders of $b$ and $b^{-1}a$ are four.
It implies that $F_n$ is $P^2$-irreducible, and we have the following result:

\begin{theorem}\label{Theorem: P2-irreducible P2-links}
    There exists an infinite family of 2-component $P^2$-irreducible $P^2$-links.
\end{theorem}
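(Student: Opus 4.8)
The plan is to take the family to be the surface-links $F_n$ $(n\geq 1)$ constructed in the proof of Theorem~\ref{Theorem: dihedral quandle and knot symmetric quandle}, together with the two facts recorded just before the present statement: each $F_n$ is a $2$-component $P^2$-link, and its knot group $\pi_1(\R^4\setminus F_n)$ is the generalized quaternion group $Q_n$ of order $8n$, in which the meridians of the two components of $F_n$ are represented, up to conjugacy, by $b$ and $b^{-1}a$, both of order $4$. I would first justify these facts carefully. That $F_n$ is a $2$-component $P^2$-link follows because the plat closure in question has Euler characteristic $2m-n = 2\cdot 2 - 2 = 2$ (Theorem~\ref{Main theorem}(2)) and, by its knot symmetric quandle — the dihedral quandle with the antipodal map, whose two connected components are each fixed by the involution — has two non-orientable components via Theorem~\ref{Main Theorem2}(3) and Proposition~\ref{Proposition: Presentations of dihedral quandles with anti}; since a closed non-orientable surface has Euler characteristic at most $1$, with equality exactly for $\RP^2$, each component is a projective plane. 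The identification of the knot group comes from applying Proposition~\ref{Proposition: knot group of the plat form} to the braid system of $F_n$ and then rewriting the resulting $(4,2)$-presentation with inverses using the $B_4$-action formulas of Section~\ref{Subsection: Braid systems of braided surfaces} and elementary Tietze moves, arriving at the standard presentation of $Q_n$ and tracking $b$ and $b^{-1}a$ as the component meridians along the way. With these in hand, two things remain: the $F_n$ are pairwise inequivalent, and each is $P^2$-irreducible.

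Pairwise inequivalence is immediate, since $|\pi_1(\R^4\setminus F_n)| = |Q_n| = 8n$ already distinguishes the $F_n$. For $P^2$-irreducibility, suppose on the contrary that some $F_n$ is $P^2$-reducible, so that $F_n = P_0 \# G$ for a standard $P^2$-knot $P_0$ and a surface-link $G$; let $G_0$ be the component of $G$ to which $P_0$ is attached. Since $F_n$ is a $P^2$-link, the component $P_0 \# G_0$ of $F_n$ is a projective plane, which, comparing Euler characteristics, forces $G_0 \cong S^2$. Van Kampen's theorem applied to the connected-sum decomposition exhibits $\pi_1(\R^4\setminus F_n)$ as the pushout of $\pi_1(\R^4\setminus P_0) \longleftarrow \Z \longrightarrow \pi_1(\R^4\setminus G)$, where the generator of $\Z$ goes to the meridian $\mu$ of $G_0$ on the right and to the meridian of $P_0$ on the left; under the natural inclusion this $\mu$ is the meridian of the component $P_0 \# G_0$ of $F_n$. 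Since $\pi_1(\R^4\setminus P_0) \cong \Z/2$ is generated by that meridian, the relation $\mu^2 = 1$ holds in $\pi_1(\R^4\setminus F_n) \cong Q_n$. But $\mu$ is the meridian of one of the two $P^2$-components of $F_n$, hence is conjugate to $b$ or to $b^{-1}a$ and therefore has order $4$ in $Q_n$ --- a contradiction. Hence each $F_n$ is $P^2$-irreducible, and $\{F_n\}_{n\geq 1}$ is the desired infinite family.

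The main obstacle is the group computation in the first paragraph: patiently applying the $\sigma_i^{\pm 1}$-action formulas to the specific braid system defining $F_n$, simplifying to the standard generalized-quaternion presentation, and recognizing $b$ and $b^{-1}a$ as the component meridians. Everything else is routine --- the classical fact $\pi_1(\R^4\setminus P_0) \cong \Z/2$ for the standard projective plane, the elementary observation that every element of $Q_n$ outside its index-$2$ cyclic subgroup has order $4$, and the van Kampen argument for connected sums of surface-links, where the only real input is that $G_0 \cong S^2$ makes the amalgamating homomorphism $\Z \to \pi_1(\R^4\setminus P_0) = \Z/2$ surjective, so that the pushout imposes $\mu^2 = 1$.
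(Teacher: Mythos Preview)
Your proposal is correct and follows essentially the same route as the paper: you take the family $\{F_n\}_{n\ge 1}$ from Case~2 of the proof of Theorem~\ref{Theorem: dihedral quandle and knot symmetric quandle}, use that the knot group is the generalized quaternion group $Q_n$ with meridians $b$ and $b^{-1}a$ of order~$4$, and conclude $P^2$-irreducibility because a $P^2$-reduction would force a meridian to square to the identity. The paper records exactly these facts just before the theorem and leaves the van Kampen/connected-sum step and the pairwise inequivalence implicit; you have simply spelled them out.
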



\section*{Acknowledgment}
The author would like to thank Taizo Kanenobu and Yuta Taniguchi for their helpful advice on the computation of the knot groups and the knot symmetric quandles of surface-links.
He also would like to thank Seiichi Kamada for helpful comments on this work.
This work was supported by JSPS KAKENHI Grant Number 22J20494.

\bibliographystyle{plain}
\bibliography{reference.bib}
\end{document}